\def\subjclass#1{{\renewcommand{\thefootnote}{}%
\footnote{\emph{Mathematics Subject Classification (2020):} #1}}}
\DeclareMathOperator{\curl}{curl}
\DeclareMathOperator{\divg}{div}
\date{\today}
\theoremstyle{plain}
\newtheorem{Thm}{Theorem}
\newtheorem{Rem}[Thm]{Remark}
\newtheorem{Lem}[Thm]{Lemma}
\newtheorem{Cor}[Thm]{Corollary}
\def\0{\mathbf 0}
\def\v{\vskip}
\def\div{\text{\rm div\,}}
\def\curl{\text{\rm curl\,}}
\numberwithin{equation}{section}
\numberwithin{Thm}{section}
\begin{document}
\large

\title[Liouville type theorems]
{Some new Liouville type theorems for the 3D stationary magneto-micropolar fluid equations}


\author[]{Zhibing Zhang}
\address{Zhibing Zhang: School of Mathematics and Physics, Key Laboratory of Modeling, Simulation and Control of Complex Ecosystem in Dabie Mountains of Anhui Higher Education Institutes, Anqing Normal University, Anqing 246133, People's Republic of China}
\email{zhibingzhang29@126.com}%

\author[]{Qian Zu}
\address{Qian Zu: School of Mathematics and Statistics, Anhui Normal University, Wuhu 241002, People's Republic of China}
\email{qianzu5822@163.com}

\thanks{}

\keywords{Liouville type theorems, logarithmic improvement, magneto-micropolar fluid equations, micropolar fluid equations}

\subjclass{35B53, 76D03, 35A02}

\begin{abstract}
In this paper, we investigate Liouville type theorems for the 3D stationary magneto-micropolar fluid equations and micropolar fluid equations. Adopting an iteration procedure, taking advantage of the special structure of the equations and using a novel combination of interpolation techniques, we establish Liouville type theorems if the smooth solution satisfies certain growth conditions in terms of $L^p$-norms on the annuli. Furthermore, combining the energy method and some subtle ODE analysis, we relax the growth conditions on the velocity field and the magnetic field by logarithmic factors and obtain logarithmic improvement of Liouville type theorems. Compared with the velocity and the magnetic field, we raise the most relaxed restriction for the angular velocity. More specifically, we allow $L^q$-norm of the angular velocity on the annuli to grow polynomially at any degree, i.e. $\|\omega\|_{L^q(B_{2R}\backslash B_{3R/2})}$ is permitted to grow as fast as $R^N$ at infinity, where $N$ is an arbitrary positive integer.
\end{abstract}
\maketitle

\section{Introduction}
Consider the stationary magneto-micropolar fluid equations in $\mathbb{R}^{3}$:
\begin{align}\label{equ1.1}
  \left\{
    \begin{array}{ll}
     -\Delta u+(u\cdot\nabla) u+\nabla \pi=\chi\curl \omega+(b\cdot\nabla) b,  \\
    -\Delta\omega+(u\cdot\nabla)\omega-\nabla(\mathrm{div}\omega)+2\chi\omega=\chi\curl u,  \\
   -\Delta b+(u\cdot\nabla) b-(b\cdot\nabla) u=0, \\
     \divg u=\divg b=0,
\end{array}
  \right.
\end{align}
 where $u$ is the velocity field, $\omega$ is the angular velocity, $b$ is the magnetic field,  $\pi$ is the scalar pressure, $\curl u=\nabla\times u$. The positive parameter $\chi$ represents the micro-rotational viscosity. When $b=0$, \eqref{equ1.1} reduces to the micropolar fluid equations. When $\omega=b=0$ and $\chi=0$, \eqref{equ1.1} becomes the Navier-Stokes equations. Since the magneto-micropolar fluid equations have important physical background, rich phenomena, mathematical complexity and challenges, it has attracted the attention of many physicists and mathematicians, and many interesting results have been established, see \cite{AS74,BRF03,LLZ24,R97,RB88,TWZ19,ZWX25,Zhao23} for example.

In recent years, there have been many efforts to establish Liouville type theorems for various fluid equations. For the three-dimensional steady incompressible Navier-Stokes equations, many scholars proved the triviality of $u$ under some additional conditions. For example, Galdi \cite{Galdi} proved a Liouville type theorem, assuming that the velocity field $u \in L^{\frac{9}{2}}(\mathbb{R}^{3})$. As an extension, Chamorro et al. \cite{DJL21} showed that the condition  $u\in L^p\left(\mathbb{R}^{3}\right)$ with $3\leq p\leq \frac{9}{2}$ is sufficient to get Liouville type result. Seregin \cite{Seregin18} handled the pressure term by applying the Bogovskii map on the ball, and established the Liouville type theorem if $u$ fulfils
$$\displaystyle\sup_{R>0}R^{\alpha}\left(\frac{1}{|B_{R}|}\int_{B_{R}}|u|^{p}dx\right)^{\frac{1}{p}}<+\infty, ~~~~~~~~~~~~with~~~~~~~~~~~~\frac{3}{2}<p<3~~~~~~~~~~~~and~~~~~~~~~~~~\alpha>\frac{6p-3}{8p-6}.$$
Recently, by using an iteration argument,  Cho et al. \cite{CNY24} got an improvement of Seregin's result. More precisely, they showed Liouville type theorem holds if $u$ satisfies one of the following conditions
\begin{align*}
\aligned
&(\mathrm{i})\liminf_{R\rightarrow+\infty}R^{-(\frac{2}{p}-\frac{1}{3})}\|u\|_{L^p(B_{2R}\backslash B_R)}<+\infty,\;\frac{3}{2}<p<3,\\
&(\mathrm{ii})\liminf_{R\rightarrow+\infty}R^{-\frac{1}{3}}\|u\|_{L^3(B_{2R}\backslash B_R)}=0.
\endaligned
\end{align*}
Very recently, Cho-Yang \cite{CY25} got a logarithmic improvement of Liouville type results for the steady Navier-Stokes equations. Specifically, they proved the triviality of the smooth solution of the Navier-Stokes equations if
$$
\limsup_{R\rightarrow+\infty}\frac{\|u\|_{L^p(B_{2R}\backslash B_\frac{3R}{2})}}{R^{\frac{2}{p}-\frac{1}{3}}(\ln R)^{\frac{3}{p}-1}}<+\infty,\;\frac{3}{2}<p<3.
$$
They proved this result by a case-by-case discussion based on
\begin{align*}
\liminf_{R\rightarrow+\infty}\frac{\|u\|_{L^3(B_{2R}\backslash B_\frac{3R}{2})}}{R^{\frac{2}{3}-\frac{1}{p}}(\ln R)^{\frac{3}{p}-1}}<+\infty\text{ or }=+\infty.
\end{align*}
To the best of our knowledge, their method can not be applied to obtain Liouville type results of the logarithmic version for the systems coupled with the Navier-Stokes equations.
For more interesting works on the Liouville type results for the Navier-Stokes equations, one can refer to \cite{Chae14,CW19,KNSS09,KTW17,Seregin16,SW19,Tsai21}.

 So far, there are few Liouville type results for the micropolar fluid equations and the magneto-micropolar fluid equations. Kim and Ko \cite{KK} proved the vanishing property of a smooth solution $(u,\omega,b)$  to \eqref{equ1.1} with $\chi=1$ under the following assumption
\begin{align*}
(u,\omega,b)\in L^p\left(\mathbb{R}^{3}\right)\text{ with }2\leq p< \frac{9}{2}.
\end{align*}
Lately, Cho et al. \cite[Theorem 2]{CNY25} obtained the Liouville type theorem for \eqref{equ1.1} with $\chi=1$ provided that
\begin{align}\label{later1.2}
\liminf\limits_{R\rightarrow+\infty}\left(R^{-(\frac{2}{p}-\frac{1}{3})}\|u\|_{L^p(B_{2R}\backslash B_R)}
+R^{-h(q)}\|\omega\|_{L^q(B_{2R}\backslash B_R)}+R^{-(\frac{2}{r}-\frac{1}{3})}\|b\|_{L^r(B_{2R}\backslash B_R)}\right)<+\infty,
\end{align}
where $p,q,r$ satisfy $\frac{3}{2}< p<3$, $1\leq q\leq +\infty$, $1\leq r\leq 3$, and $h(q)$ is defined by
$h(q):=\frac{2}{q}-\frac{1}{3}$ for $1\leq q\leq 3$, $h(q):=1-\frac{2}{q}$ for $3<q\leq+\infty$. They also obtained the Liouville type theorem for \eqref{equ1.1} under
\begin{align}\label{later1.3}
\liminf\limits_{R\rightarrow+\infty}\left(R^{-\frac{1}{3}}\|u\|_{L^3(B_{2R}\backslash B_R)}
+R^{-h(q)}\|\omega\|_{L^q(B_{2R}\backslash B_R)}+R^{-(\frac{2}{r}-\frac{1}{3})}\|b\|_{L^r(B_{2R}\backslash B_R)}\right)=0,
\end{align}
where $q,r$ satisfy $1\leq q\leq +\infty$, $1\leq r\leq 3$. As a corollary (see \cite[Theorem 3]{CNY25}), they showed that if
\begin{align}\label{later1.4}
u\in L^p(\mathbb{R}^3),\;\omega\in L^q(\mathbb{R}^3),\;b\in L^r(\mathbb{R}^3)\text{ with } \frac{3}{2}<p\leq\frac{9}{2},\;1\leq q\leq+\infty,\;1\leq r\leq \frac{9}{2},
\end{align}
then $u=\omega=b=0$.
On the other hand, Chamorro et al. \cite{CLV} studied the following micropolar fluid equations with $\kappa\gg1$:
\begin{align}\label{equ1.3}
  \left\{
    \begin{array}{ll}
     -\Delta u+(u\cdot\nabla) u+\nabla \pi=\frac{1}{2}\curl \omega,  \\
    -\Delta\omega+(u\cdot\nabla)\omega-\nabla(\mathrm{div}\omega)+\kappa\omega=\frac{1}{2}\curl u,  \\
     \divg   u=0,
\end{array}
  \right.
\end{align}
and they proved $(u,\omega)=0$ if
\begin{align*}
u\in \dot{H}^1(\mathbb{R}^{3})\cap L^p(\mathbb{R}^{3}),\omega\in{H}^1(\mathbb{R}^{3}),\pi\in \dot{H}^{\frac{1}{2}}(\mathbb{R}^{3}),\text{ where }3\leq p\leq \frac{9}{2}.
\end{align*}
As mentioned in \cite{CLV}, $\kappa\gg1$ is a technical parameter, which is due to the limitation of the estimation methodology.

Our goal of this paper is to establish Liouville type results for \eqref{equ1.1} and \eqref{equ1.3}, provided that $L^p$-norms of the velocity field $u$, the angular velocity $\omega$ and the magnetic field $b$ on the annuli $B_{2R}\backslash B_\frac{3R}{2}$ satisfy some growth conditions at infinity. In the framework of an iteration argument, we make full use of the special structure of the equations and utilize a novel combination of several different interpolation inequalities (see the proofs of the key Lemmas \ref{Lemma2.4} and \ref{Lemma2.6}) to achieve our goal. Furthermore, inspired by the works of \cite{CY25,ZZB25}, combining the energy method and some subtle ODE analysis, we succeed in relaxing the growth conditions on the velocity field and the magnetic field by logarithmic factors and obtain logarithmic improvement version of Liouville type theorems. Compared with the velocity field and the magnetic field, we raise the most relaxed restriction for the angular velocity. More precisely, we allow $L^q$-norm of the angular velocity on the annuli $B_{2R}\backslash B_\frac{3R}{2}$ to grow polynomially at any degree. Our results significantly improve the results of Cho et al. \cite{CNY25}, see
Remark \ref{Rem1.4} for details. Besides, the requirement on the technical parameter  $\kappa$ in \eqref{equ1.3} can be weakened to $\kappa>\frac{1}{4}$.

In order to show our Liouville type results, we need to introduce some notations. Let $p'$ denote the conjugate exponent to $p$, i.e., $p'=\frac{p}{p-1}$. For simplicity, we denote
$$X_{p,\alpha}(R)=R^{-\alpha}\|u\|_{L^p\left(A_R\right)},\;Y_{q,\beta}(R)=R^{-\beta}\|\omega\|_{L^q\left(A_R\right)},\;Z_{r,\gamma}(R)=R^{-\gamma}\|b\|_{L^r\left(A_R\right)},$$
$$X_{p,\alpha,\lambda}(R)=R^{-\alpha}(\ln R)^{-\lambda}\|u\|_{L^p\left(A_R\right)},\;Z_{r,\gamma,\nu}(R)=R^{-\beta}(\ln R)^{-\nu}\|b\|_{L^q\left(A_R\right)},$$
where $A_R=B_{2R}\backslash \overline{B_\frac{3R}{2}}$. To begin with, we state Liouville type theorems for the magneto-micropolar fluid equations.

\begin{Thm}\label{main1}
Let $(u,\pi,\omega,b)$ be a smooth solution of \eqref{equ1.1}. Suppose $q\in[1,+\infty]$, $r\in [1,6]$, $\beta\in\left[0,+\infty\right)$, $\gamma\in\left[0,\frac{3}{r}-\frac{1}{2}\right]$, $\chi\in(0,2)$. Suppose that one of the following assumptions holds
\begin{align*}
\mathrm{(A1)}\;&\liminf\limits_{R\rightarrow+\infty}\left[X_{p,\alpha}(R)+Y_{q,\beta}(R)+Z_{r,\gamma}(R)\right]<+\infty,\text{ where  $p,r,\alpha,\gamma$ satisfy}\\
&p\in\left(\frac{3}{2},3\right),\;r\in[1,2p'),\;\alpha\in\left[0,\frac{2}{p}-\frac{1}{3}\right],\;\alpha+\frac{(4p-6)r}{(6-r)p}\gamma\leq1;\\
\mathrm{(A2)}\;&\liminf\limits_{R\rightarrow+\infty}\left[X_{p,\alpha}(R)+Y_{q,\beta}(R)+Z_{r,\gamma}(R)\right]<+\infty,\text{ where  $p,r,\alpha,\gamma$ satisfy}\\
&p\in\left(\frac{3}{2},3\right),\;r\in[2p',6],\;\alpha\in\left[0,\frac{2}{p}-\frac{1}{3}\right],\;\alpha+2\gamma<\frac{3}{p}+\frac{6}{r}-2;\\
\mathrm{(A3)}\;&\liminf\limits_{R\rightarrow+\infty}X_{p,\alpha}(R)=0,\;\limsup\limits_{R\rightarrow+\infty}\left[Y_{q,\beta}(R)+Z_{r,\gamma}(R)\right]<+\infty,\text{ where  $p,r,\alpha,\gamma$ satisfy}\\
&p\in\left[3,\frac{9}{2}\right],\;r\in[1,2p'),\;\alpha\in\left[0,\frac{3}{p}-\frac{2}{3}\right],\;\alpha+\frac{(4p-6)r}{(6-r)p}\gamma\leq1;\\
\mathrm{(A4)}\;&\liminf\limits_{R\rightarrow+\infty}X_{p,\alpha}(R)=0,\;\limsup\limits_{R\rightarrow+\infty}\left[Y_{q,\beta}(R)+Z_{r,\gamma}(R)\right]<+\infty,\text{ where  $p,r,\alpha,\gamma$ satisfy}\\
&p\in\left[3,\frac{9}{2}\right],\;r\in[2p',6],\;\alpha\in\left[0,\frac{3}{p}-\frac{2}{3}\right],\;\alpha+2\gamma\leq\frac{3}{p}+\frac{6}{r}-2.
\end{align*}
Then $u=\omega=b=0$.
\end{Thm}

Roughly speaking, we can relax the growth conditions in Theorem \ref{main1} by logarithmic factors in certain situations, where at least one of the inequalities $p<3$ and $r<2p'$ is satisfied. For the convenience of presenting our Liouville type results of logarithmic improvement version, we make two basic assumptions on the parameters $\lambda$ and $\nu$. The first one is
\begin{align}\label{ass1.8}
\lambda\in\left[0,\frac{3}{p}-1\right] \text{ for }p\in\left(\frac{3}{2},3\right)\text{ and }\lambda=0\text{ for }p\in\left[3,\frac{9}{2}\right],\;\nu\geq0.
\end{align}
The second one is
\begin{subequations}
\begin{align}
\text{when }&r\in[1,2p')\text{ and }(\alpha,\gamma)\neq\left(\frac{3}{p}-1,\frac{3}{r}-\frac{1}{2}\right),\text{ we assume}\notag\\
&\lambda+\frac{(4p-6)r}{(6-r)p}\nu\leq\frac{6p-3r(p-1)}{(6-r)p};\label{ass1.9a}\\
\text{when }&r\in[1,2p')\text{ and }(\alpha,\gamma)=\left(\frac{3}{p}-1,\frac{3}{r}-\frac{1}{2}\right),\text{ we assume}\notag\\
&\lambda+\frac{6p+pr-3r}{(6-r)p}\nu\leq\frac{6p-3r(p-1)}{2(6-r)p}.\label{ass1.9b}
\end{align}
\end{subequations}

Now we are ready to show our Liouville type results of logarithmic improvement version.
\begin{Thm}\label{main2}
Let $(u,\pi,\omega,b)$ be a smooth solution of \eqref{equ1.1}. Suppose $q\in[1,+\infty]$, $r\in [1,6]$, $\beta\in\left[0,+\infty\right)$, $\gamma\in\left[0,\frac{3}{r}-\frac{1}{2}\right]$, $\chi\in(0,2)$. Assume that $\lambda,\nu$ satisfy \eqref{ass1.8} and \eqref{ass1.9a}-\eqref{ass1.9b}.  Suppose that one of the following assumptions holds
\begin{align*}
\mathrm{(B1)}\;&\limsup\limits_{R\rightarrow+\infty}\left[X_{p,\alpha,\lambda}(R)+Y_{q,\beta}(R)+Z_{r,\gamma,\nu}(R)\right]<+\infty,\text{ where  $p,r,\alpha,\gamma$ satisfy}\\
&p\in\left(\frac{3}{2},3\right),\;r\in[1,2p'),\;\alpha\in\left[0,\frac{2}{p}-\frac{1}{3}\right],\;\alpha+\frac{(4p-6)r}{(6-r)p}\gamma\leq1,\\
\mathrm{(B2)}\;&\limsup\limits_{R\rightarrow+\infty}\left[X_{p,\alpha,\lambda}(R)+Y_{q,\beta}(R)+Z_{r,\gamma,\nu}(R)\right]<+\infty,\text{ where  $p,r,\alpha,\gamma$  satisfy}\\
&p\in\left(\frac{3}{2},3\right),\;r\in[2p',6],\;\alpha\in\left[0,\frac{2}{p}-\frac{1}{3}\right],\;\alpha+2\gamma<\frac{3}{p}+\frac{6}{r}-2;\\
\mathrm{(B3)}\;&\lim\limits_{R\rightarrow+\infty}X_{p,\alpha}(R)=0,\;\limsup\limits_{R\rightarrow+\infty}\left[Y_{q,\beta}(R)+Z_{r,\gamma,\nu}(R)\right]<+\infty,\text{ where  $p,r,\alpha,\gamma$  satisfy}\\
&p\in\left[3,\frac{9}{2}\right],\;r\in[1,2p'),\;\alpha\in\left[0,\frac{3}{p}-\frac{2}{3}\right],\;\alpha+\frac{(4p-6)r}{(6-r)p}\gamma\leq1.
\end{align*}
Then $u=\omega=b=0$.
\end{Thm}

As a consequence of Theorem \ref{main1} and Remark \ref{Rem3.1} in Section \ref{sec3}, we obtain the following Liouville type theorem for \eqref{equ1.1} in Lebesgue spaces.
\begin{Cor}\label{Cor1.3}
Let $(u,\pi,\omega,b)$ be a smooth solution of \eqref{equ1.1} and $\chi\in(0,2)$. Suppose $u\in L^p(\mathbb{R}^3)$, $\omega\in L^q(\mathbb{R}^3)$, $b\in L^r(\mathbb{R}^3)$. Then $u=\omega=b=0$ if one of the following assumptions holds
\begin{align*}
\mathrm{(C1)}&\;p\in\left(\frac{3}{2},3\right),\;q\in[1,+\infty],\;r\in[1,6];\\
\mathrm{(C2)}&\;p\in\left[3,\frac{9}{2}\right],\;q\in[1,+\infty],\;r\in[1,2p');\\
\mathrm{(C3)}&\;p\in\left[3,\frac{9}{2}\right],\;q\in[1,+\infty],\;r\in[2p',6],\text{ with }\;\frac{1}{p}+\frac{2}{r}\geq\frac{2}{3}.
\end{align*}
\end{Cor}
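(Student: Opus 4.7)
The plan is to reduce the global Lebesgue hypotheses on $u,\omega,b$ to the annular growth conditions in Theorems \ref{main1} and \ref{main2}, with the growth exponents chosen as $\alpha=\beta=\gamma=0$, and then to verify the arithmetic constraints (A1)--(A3) case by case. Since $u\in L^p(\mathbb{R}^3)$ and $b\in L^r(\mathbb{R}^3)$, the absolute continuity of the Lebesgue integral gives
\begin{equation*}
\lim_{R\to+\infty}\|u\|_{L^p(A_R)}=0,\qquad \lim_{R\to+\infty}\|b\|_{L^r(A_R)}=0,
\end{equation*}
while $\|\omega\|_{L^q(A_R)}$ stays uniformly bounded (and tends to $0$ when $q<+\infty$). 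Consequently $X_{0,p,R}\to 0$, $Z_{0,r,R}\to 0$, and $Y_{0,q,R}$ is bounded; after passing to a subsequence along which $Y_{0,q,R_j}$ converges, both \eqref{lim} and \eqref{lim1} are satisfied trivially.

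It then suffices to verify the arithmetic conditions on $(p,r,\alpha,\gamma)$ in each clause. For (B1), with $p\in[3,\frac{9}{2}]$ and $r\in[1,2p')$, alternative (A1) degenerates to $0\le 1$, which is trivial, so Theorem \ref{main1} applies. For (B2), with $p$ in the same range and $r\in[2p',6]$, alternative (A2) becomes $0\le \frac{3}{p}+\frac{6}{r}-2$, which is exactly the standing hypothesis $\frac{1}{p}+\frac{2}{r}\ge\frac{2}{3}$; again Theorem \ref{main1} concludes. For (B3), with $p\in(\frac{3}{2},3)$ and $r\in[1,6]$, split according to whether $r<2p'$ (handled by (A1), trivially) or $r\in[2p',6]$ (handled by (A3)); in the latter subcase the strict inequality $0<\frac{3}{p}+\frac{6}{r}-2$ holds automatically because $p<3$ forces $\frac{3}{p}>1$ and $r\le 6$ forces $\frac{6}{r}\ge 1$, so the sum strictly exceeds $2$. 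Theorem \ref{main2} then gives the conclusion.

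The argument is therefore little more than bookkeeping, and no step poses a genuine technical difficulty. The only subtle points concern endpoint or transition behaviour—in particular the overlap at $p=3$ between Theorems \ref{main1} and \ref{main2} and the borderline $r=2p'$ where (A1) meets (A2) and (A3)—and for these we invoke Remark \ref{Rem3.1}, which is formulated precisely to accommodate them.
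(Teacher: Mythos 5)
Your proof is correct and follows essentially the same route as the paper: both arguments reduce the global Lebesgue hypotheses to Theorems \ref{main1} and \ref{main2} with $\alpha=\beta=\gamma=0$, using that $u\in L^p$, $b\in L^r$ force $\|u\|_{L^p(A_R)},\|b\|_{L^r(A_R)}\to 0$ while $\|\omega\|_{L^q(A_R)}$ stays bounded, and then verify (A1), (A2), (A3) case by case. Your treatment of $q=+\infty$ (boundedness plus a subsequence) is in fact a bit more careful than the paper's one-line limit claim, and since you correctly note that (A2) holds at equality in (B2) and that (A3) is automatically strict when $p<3$ and $r\le 6$, the appeal to Remark \ref{Rem3.1} at the end is actually superfluous—the endpoints you worry about ($p=3$, $r=2p'$) are already inside the stated ranges of the theorems.
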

\begin{Rem}\label{Rem1.4}
Theorem 2 and Theorem 3 in \cite{CNY25} are special cases of Theorem \ref{main1} and Corollary \ref{Cor1.3} in this paper, respectively. Moreover, we make improvements mainly in three aspects:
\begin{itemize}
\item[(1)] In our setting, the parameter $\beta$ can be arbitrarily large. It is permitted that $\|\omega\|_{L^q(A_R)}$ grows as fast as $R^N$ at infinity, where $N$ is an arbitrary positive integer.
\item[(2)] For the velocity field and the magnetic field, we relax the growth conditions by logarithmic factors.
\item[(3)] We relax the range of the parameters $\alpha$ and $\gamma$, which is different from the setting in \eqref{later1.2} and \eqref{later1.3}.
This flexibility makes it possible to extend the range of the integrability index $r$ from $1\leq r\leq \frac{9}{2}$ to $1\leq r\leq 6$, see \eqref{later1.4} and the assumptions in Corollary \ref{Cor1.3}.
\end{itemize}
\end{Rem}

Similarly to the magneto-micropolar fluid equations, we can also establish Liouville type results for the micropolar fluid equations.
\begin{Thm}\label{main3}
Let $(u,\pi,\omega)$ be a smooth solution of \eqref{equ1.3}. Suppose  $q\in [1,+\infty]$, $\beta\in\left[0,+\infty\right)$ and $\kappa\in(\frac{1}{4},+\infty)$. Assume that one of the following assumptions holds
\begin{align*}
\mathrm{(D1)}\;&\liminf\limits_{R\rightarrow+\infty}\left[X_{p,\alpha}(R)+Y_{q,\beta}(R)\right]<+\infty,\text{ where }p\in\left(\frac{3}{2},3\right),\;\alpha\in\left[0,\frac{2}{p}-\frac{1}{3}\right];\\
\mathrm{(D2)}\;&\liminf\limits_{R\rightarrow+\infty}X_{p,\alpha}(R)=0,\;\limsup\limits_{R\rightarrow+\infty}Y_{q,\beta}(R)<+\infty,\text{ where }p\in\left[3,\frac{9}{2}\right],\;\alpha\in\left[0,\frac{3}{p}-\frac{2}{3}\right].
\end{align*}
Then $u =\omega= 0$.
\end{Thm}

\begin{Thm}\label{main4}
Let $(u,\pi,\omega)$ be a smooth solution of \eqref{equ1.3}. Suppose $p\in(\frac{3}{2},3)$, $q\in [1,+\infty]$, $\alpha=\frac{2}{p}-\frac{1}{3}$, $\beta\in\left[0,+\infty\right)$, $\lambda=\frac{3}{p}-1$ and $\kappa\in(\frac{1}{4},+\infty)$.
Assume that
$$\limsup\limits_{R\rightarrow+\infty}[X_{p,\alpha,\lambda}(R)+Y_{q,\beta}(R)]<+\infty.$$
Then $u =\omega= 0$.
\end{Thm}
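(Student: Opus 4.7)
The plan is to run the iterative argument that establishes Theorem \ref{main2}, since \eqref{equ1.3} is precisely \eqref{equ1.1} with $b\equiv 0$ and the coefficient $\kappa$ playing the role of the combination $2\chi$. First I would fix a large radius $R$ and a radial cut-off $\varphi_R\in C_c^\infty(B_{2R})$ that equals $1$ on $B_R$ with $|\nabla^k\varphi_R|\lesssim R^{-k}$. Testing the momentum equation of \eqref{equ1.3} against $u\,\varphi_R^s$ and the angular momentum equation against $\omega\,\varphi_R^s$, for a power $s$ prescribed by the Bogovskii step, and adding, yields a local energy identity whose left-hand side dominates
$$\int|\nabla u|^2\varphi_R^s+\int|\nabla\omega|^2\varphi_R^s+\int|\divg\omega|^2\varphi_R^s+\kappa\int|\omega|^2\varphi_R^s$$
modulo error terms supported in the annulus $A_R$.

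The coupling pair $\tfrac12\int(\curl\omega)\cdot u\,\varphi_R^s$ and $\tfrac12\int(\curl u)\cdot\omega\,\varphi_R^s$ is symmetrised through the identity $\curl(\omega\varphi_R^s)=\varphi_R^s\curl\omega+\nabla\varphi_R^s\times\omega$, producing the single bulk contribution $\int\omega\cdot\curl u\,\varphi_R^s$ plus a commutator of order $R^{-1}\int_{A_R}|u||\omega|$. Young's inequality with parameter $\delta$ then bounds the bulk piece by $\delta\int|\nabla u|^2\varphi_R^s+\tfrac{1}{4\delta}\int|\omega|^2\varphi_R^s$, and the two pieces absorb into the LHS precisely when $\delta<1$ and $1/(4\delta)<\kappa$; the compatibility of these two inequalities is exactly the hypothesis $\kappa>\tfrac14$.

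The pressure integral $-\int\pi\,u\cdot\nabla\varphi_R^s$ is the genuine obstacle in the sub-critical range $p\in(3/2,3)$ and is handled as in \cite{CNY24,CNY25}: I would subtract the mean of $\pi$ on $A_R$ and invoke the Bogovskii operator on $A_R$ to construct $\psi_R$ with $\divg\psi_R=\varphi_R^s-\overline{\varphi_R^s}$ and $\|\nabla\psi_R\|_{L^{p'}(A_R)}\lesssim R^{3/p'-1}$, then pair the momentum equation with $\psi_R$ to recast the pressure integral in terms controlled by $X_{\alpha,p,R}$, $Y_{\beta,q,R}$ and powers of $R$. The convective term $(u\cdot\nabla)u$ is estimated by H\"older together with the Gagliardo-Nirenberg interpolation $\|u\|_{L^3(A_R)}\lesssim\|u\|_{L^p(A_R)}^\theta\|\nabla u\|_{L^2(A_R)}^{1-\theta}$ with $\theta=p/(6-p)$ and a further Young inequality; requiring the resulting power of $R$ to be non-positive is what pins down $\alpha\le 2/p-1/3$. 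Feeding the collected inequality into the iterative scheme of Lemmas~\ref{Lemma2.4}--\ref{Lemma2.6} along the $\liminf$ sequence $R_j\to\infty$ forces $\nabla u\equiv 0$ and $\omega\equiv 0$, which together with $\divg u=0$ and $u\in L^p_{\mathrm{loc}}$ gives $u\equiv\omega\equiv 0$.

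The main obstacle I anticipate is arranging the iteration so that the arbitrary polynomial growth $R^\beta$ allowed for $\|\omega\|_{L^q(A_R)}$ never contaminates a non-absorbable term on the right. This is possible because, after the symmetrisation above, $\omega$ enters the right-hand side only through absorbable quadratic expressions of the form $\tfrac{1}{4\delta}\int|\omega|^2\varphi_R^s$ and through annulus commutators multiplied by strictly negative powers of $R$; combined with the vanishing of $X_{\alpha,p,R_j}$ along $R_j$ and the Gagliardo-Nirenberg interpolation of the $\omega$-norms, the unrestricted factor $Y_{\beta,q,R}$ is either absorbed into the LHS or overwhelmed by a power of $R$ going to zero.
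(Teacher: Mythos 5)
Your overall architecture (annulus cut-off, Bogovskii correction, Caccioppoli-type inequality fed into the iteration with the $\omega$-lemmas of Section \ref{sec2}) is the same as the paper's, but there is a genuine gap exactly at the point that makes Theorem \ref{main4} nontrivial, namely the threshold $\kappa>\frac14$. You bound the bulk coupling term $\int\omega\cdot\mathrm{curl}\,u\,\varphi_R^s\,dx$ by ``Young's inequality'' as $\delta\int|\nabla u|^2\varphi_R^s+\frac{1}{4\delta}\int|\omega|^2\varphi_R^s$. This step is not justified: pointwise one only has $|\mathrm{curl}\,u|\le\sqrt{2}\,|\nabla u|$, so Cauchy--Schwarz plus Young gives $2\delta\int|\nabla u|^2\varphi_R^s+\frac{1}{4\delta}\int|\omega|^2\varphi_R^s$, and the compatibility conditions $2\delta<1$, $\frac{1}{4\delta}<\kappa$ force $\kappa>\frac12$, not $\kappa>\frac14$. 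The missing ingredient is the integral div--curl identity for the compactly supported field $u\eta$, $\int|\nabla(u\eta)|^2=\int|\divg(u\eta)|^2+\int|\mathrm{curl}(u\eta)|^2$, which together with $\divg u=0$ yields $\|\eta\,\mathrm{curl}\,u\|_{L^2}\le\|\eta\nabla u\|_{L^2}+C\|u\otimes\nabla\eta\|_{L^2}$ (the paper's \eqref{ine4.4}); only with this constant-one bound, applied at the $L^2$ level before Young, does the absorption work for every $\kappa>\frac14$ (the paper explicitly remarks that the pointwise estimate would require a larger $\kappa$). As written, your argument proves a weaker statement.

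Two secondary points. First, your pressure correction is misaligned: you propose a Bogovskii field with $\divg\psi_R=\varphi_R^s-\overline{\varphi_R^s}$, which does not match the term $-\int\pi\,u\cdot\nabla\varphi_R^s$ you need to control; the workable choice (used in the paper) is to solve $\divg v=u\cdot\nabla\eta^2$ on the annulus as in \eqref{ine2.1} and test with $u\eta^2-v$, so the pressure disappears altogether, with $\|\nabla v\|$ controlled by \eqref{ine2.2}. Second, two inaccuracies at the end: the hypothesis of Theorem \ref{main4} only gives $\liminf(X_{\alpha,p,R}+Y_{\beta,q,R})<+\infty$, not the vanishing of $X_{\alpha,p,R_j}$ you invoke (boundedness does suffice here because $\alpha\le\frac2p-\frac13<1$ and one takes $\theta$ close to $1$, depending on $\beta$, in Lemmas \ref{Lemma2.4} and \ref{Lemma2.6} --- fixed negative powers of $R$ alone cannot beat $R^{2\beta}$ for large $\beta$); and the final inference ``$\nabla u\equiv0$, $\divg u=0$, $u\in L^p_{\mathrm{loc}}$ $\Rightarrow u\equiv0$'' fails for nonzero constants --- one should instead conclude as in the paper, from the second pass of the iteration which sends $f(R_j)\to0$, or rule out constants using the growth bound $\|u\|_{L^p(A_R)}=O(R^{\alpha})$ with $\alpha<\frac3p$.
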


\begin{Cor}\label{Cor1.7}
Let $(u,\pi,\omega)$ be a smooth solution of \eqref{equ1.3} and $\kappa\in(\frac{1}{4},+\infty)$. Suppose $u\in L^p(\mathbb{R}^3)$, $\omega\in L^q(\mathbb{R}^3)$, where
$p\in(\frac{3}{2},\frac{9}{2}]$, $q\in[1,+\infty]$. Then $u=\omega=0$.
\end{Cor}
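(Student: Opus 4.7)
The plan is to reduce Corollary \ref{Cor1.7} to Theorems \ref{main3} and \ref{main4} by verifying that global $L^p$/$L^q$ integrability implies the annular growth hypotheses with the trivial choices $\alpha=\beta=0$. Accordingly, I would split the argument into the two ranges $p\in[3,9/2]$ and $p\in(3/2,3)$ handled by Theorem \ref{main3} and Theorem \ref{main4} respectively.

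For $p\in[3,9/2]$, I would first check that $\alpha=0$ lies in the admissible range $[0,3/p-2/3]$ of Theorem \ref{main3}, which is automatic since $p\leq 9/2$ forces $3/p-2/3\geq 0$. Then for any sequence $R_j\nearrow+\infty$, the annuli $A_{R_j}$ are contained in $\{|x|>R_j\}$ and escape to infinity, so dominated convergence applied to $|u|^p \mathbf{1}_{A_{R_j}}$ (justified because $u\in L^p(\mathbb{R}^3)$ with $p<+\infty$) yields $X_{0,p,R_j}=\|u\|_{L^p(A_{R_j})}\to 0$. The same argument when $q<+\infty$ gives $Y_{0,q,R_j}\to 0$, while for the endpoint $q=+\infty$ I would simply use the uniform bound $\|\omega\|_{L^\infty(A_{R_j})}\leq\|\omega\|_{L^\infty(\mathbb{R}^3)}<+\infty$. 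In either case the hypotheses of Theorem \ref{main3} are met, and the conclusion $u=\omega=0$ follows.

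For $p\in(3/2,3)$, a parallel verification shows $\alpha=0\in[0,2/p-1/3]$ since $2/p-1/3>1/3>0$. The same vanishing/boundedness reasoning yields $X_{0,p,R}\to 0$ and $Y_{0,q,R}$ bounded as $R\to+\infty$, so $\liminf_{R\to+\infty}(X_{0,p,R}+Y_{0,q,R})<+\infty$, and Theorem \ref{main4} produces $u=\omega=0$.

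I do not anticipate a substantive obstacle here, since the corollary is essentially a packaging result: the admissible ranges of $\alpha$ in Theorems \ref{main3}--\ref{main4} have been tuned precisely so that $\alpha=0$ is available for every $p\in(3/2,9/2]$, and the only mildly delicate point, namely the endpoint $q=+\infty$ where dominated convergence is unavailable, is handled by the trivial pointwise bound on the $L^\infty$-norm over annuli.
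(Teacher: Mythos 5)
Your proposal is correct and matches the paper's (essentially omitted) argument: exactly as in the proof of Corollary \ref{Cor1.3}, one applies Theorems \ref{main3} and \ref{main4} with $\alpha=\beta=0$, using that global $L^p$-integrability with $p<\infty$ forces the annular norms to vanish and that the $q=+\infty$ case only needs boundedness of $\|\omega\|_{L^\infty(A_R)}$ (where, if one reads the hypothesis of Theorem \ref{main3} as requiring an actual limit, a subsequence extraction or the choice $\beta>0$ settles it). No substantive difference from the paper's route.
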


The rest of this paper is organized as follows. The property of the Bogovskii map, the refined Giaquinta's iteration lemma and some key lemmas are introduced in Section \ref{sec2}. The proofs of Theorem \ref{main1} and Theorem \ref{main2} are presented in Section \ref{sec3} and Section \ref{sec4}, respectively.  It should be noted that throughout this article, we use $C$ to denote a finite inessential constant which may be different from line to line.

\v0.1in
\section{Preliminaries}\label{sec2}

To handle the pressure term, we need to introduce the Bogovskii map (see \cite[Lemma III.3.1]{Galdi} and \cite[Lemma 3]{Tsai21}).
\begin{Lem}\label{Lem2.2}
Let $R>0,1<k<\infty$ and $E=B_{kR}\setminus \overline{B_R}$ be an annulus in $\mathbb{R}^{3}$. Denote $L_{0}^\sigma(E):=\{f\in L^\sigma(E):\int_E fdx=0\}$ with $1<\sigma<\infty$. There exists a linear map
\begin{align*}
\mathrm{Bog}:L_{0}^\sigma(E)\rightarrow W_{0}^{1,\sigma}(E),
\end{align*}
such that for any  $f\in L_{0}^\sigma(E),v=\mathrm{Bog}f\in W_{0}^{1,\sigma}(E)$ is a vector field satisfying
\begin{align*}
 \hspace{0.3cm}\divg  v=f,\hspace{0.3cm}\|\nabla v\|_{L^\sigma(E)}\leq \frac{C_{\sigma}}{(k-1)k^{1-\frac{1}{\sigma}}}\|f\|_{L^\sigma(E)},
\end{align*}
where $C_\sigma$ is independent of $k$ and $R$.
\end{Lem}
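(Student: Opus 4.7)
The plan is to prove Lemma \ref{Lem2.2} via Bogovskii's classical explicit construction, adapted to the annular shell, with careful tracking of the dependence on both $R$ and $k$. First I would reduce to a fixed-scale annulus by the rescaling $\tilde x = x/R$, $\tilde v(\tilde x) = v(R\tilde x)/R$. Under this change of variables $\operatorname{div}_{x} v(x) = \operatorname{div}_{\tilde x} \tilde v(\tilde x)$, and a direct computation gives $\|\nabla v\|_{L^q(E)} = R^{3/q}\|\nabla \tilde v\|_{L^q(\tilde E)}$ and $\|f\|_{L^q(E)} = R^{3/q}\|\tilde f\|_{L^q(\tilde E)}$, where $\tilde f(\tilde x) := f(R\tilde x)$. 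The factors of $R^{3/q}$ cancel, so it suffices to produce the estimate on the fixed-scale annulus $\tilde E = B_k \setminus \overline{B}_1$ with constant $C_q/[(k-1)k^{1-1/q}]$ depending only on $k$ and $q$.

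For the core estimate I would use Bogovskii's explicit integral formula $\tilde v(x) = \int K(x,y)\,\tilde f(y)\,dy$ on a domain star-shaped with respect to a ball, where $K$ is built from a fixed cutoff supported in the ball of star-shape. Standard manipulations show $\operatorname{div} \tilde v = \tilde f$ (using the mean-zero condition), boundary vanishing, and that each $\partial_i \tilde v_j$ can be represented as a Calder\'on--Zygmund singular integral applied to $\tilde f$ plus a bounded lower-order term; the $L^q\to L^q$ bound then follows from standard singular integral theory, with constant depending quantitatively on the star-shape diameter-to-radius ratio of the domain. Since the annulus $\tilde E$ is not itself star-shaped, I would cover it by a finite family of overlapping star-shaped patches and split $\tilde f$ into a sum of mean-zero pieces using a partition of unity together with auxiliary corrections transferred through adjacent overlaps; applying the star-shaped estimate on each patch and summing in $L^q$ produces the desired global bound.

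The main obstacle is extracting the \emph{sharp} $k$-dependence from this construction, in particular the $(k-1)^{-1}$ blow-up as $k\to 1^+$. In this thin-shell regime one must cover $\tilde E$ by roughly $(k-1)^{-2}$ patches of diameter $\sim k-1$, each star-shaped with respect to a ball of comparable radius; after rescaling each patch to unit scale the local Bogovskii constant is of order $1$, so on the original patch it is of order $(k-1)^{-1}$, and the mean-zero redistribution across this large family of patches must be carried out without accruing any logarithmic or power loss. The remaining factor $k^{-(1-1/q)}$ emerges from a Poincar\'e-type normalization on a shell of outer radius $k$ once the bookkeeping is done carefully. This sharpening is precisely the content of Lemma III.3.1 in Galdi's book and of Lemma 3 in Tsai's paper, which would serve as the final input.
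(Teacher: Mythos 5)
The paper does not prove this lemma at all: it is quoted as a known result, with the statement (including the constant $\frac{C_q}{(k-1)k^{1-1/q}}$) taken directly from Galdi's Lemma III.3.1 and Tsai's Lemma 3, so your proposal, which sketches the classical Bogovskii construction and then explicitly takes the sharp $k$-dependence from those same two references, is in effect the same treatment as the paper's and is acceptable at that level.

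One caution about your sketch of the hard part: the claim that a patch of diameter $\sim k-1$, star-shaped with respect to a ball of comparable radius, carries a local Bogovskii constant of order $(k-1)^{-1}$ contradicts the scale invariance of the estimate $\|\nabla v\|_{L^q}\leq C\|f\|_{L^q}$ --- the very invariance you used in your first paragraph to remove $R$. The local constant on such a patch is of order $1$; the factor $(k-1)^{-1}$ (and the $k^{-(1-1/q)}$) must instead be extracted from the domain-decomposition step, i.e.\ from the mean-zero redistribution across the $O((k-1)^{-2})$ patches and the geometry of their overlaps, which is exactly the delicate bookkeeping carried out in Galdi and Tsai. Since you defer to those references for this point, the proposal stands, but as a self-contained argument that paragraph would need to be reworked.
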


Next we present the following Giaquinta's iteration lemma \cite[Lemma 3.1]{Giaquinta}, and its proof can be found in \cite[Lemma 2.1]{CL24}.
\begin{Lem}\label{Lem2.3}
Let $f(t)$ be a non-negative bounded function on $\left[r_0, r_1\right] \subset \mathbb{R}^{+}$. If there are non-negative constants $A_i, B_i,\alpha_i$, $i=1,2,\cdots,m$, and a parameter $\theta_0 \in[0,1)$ such that for any $r_0 \leq s<t \leq r_1$, it holds that
$$f(s) \leq \theta_0 f(t)+\sum_{i=1}^m\left(\frac{A_i}{(t-s)^{\alpha_i}}+B_i\right),$$
then
$$f(s) \leq C\sum_{i=1}^m\left(\frac{A_i}{(t-s)^{\alpha_i}}+B_i\right),$$
where $C$ is a constant depending on $\alpha_1,\alpha_2,\cdots,\alpha_m$ and $\theta_0$.
\end{Lem}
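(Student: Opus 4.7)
The plan is to use a standard hole-filling iteration on a geometrically shrinking sequence of arguments. Fix $r_0\leq s<t\leq r_1$ and a parameter $\tau\in(0,1)$ to be chosen. Define a sequence $\{t_k\}_{k\geq0}$ by $t_0=s$ and $t_{k+1}=t_k+(1-\tau)\tau^k(t-s)$, so that $t_k=s+(1-\tau^k)(t-s)$ increases monotonically toward $t$ and stays inside $[r_0,r_1]$.

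Applying the hypothesis with $(s,t)$ replaced by $(t_k,t_{k+1})$ gives
\begin{equation*}
f(t_k)\leq \theta_0 f(t_{k+1})+\sum_{i=1}^{m}\left(\frac{A_i}{((1-\tau)\tau^k(t-s))^{\alpha_i}}+B_i\right).
\end{equation*}
Iterating this inequality $n$ times starting from $k=0$, one obtains
\begin{equation*}
f(s)\leq \theta_0^n f(t_n)+\sum_{k=0}^{n-1}\theta_0^k\sum_{i=1}^{m}\left(\frac{A_i}{(1-\tau)^{\alpha_i}\tau^{k\alpha_i}(t-s)^{\alpha_i}}+B_i\right).
\end{equation*}
Rewriting, the sum over $k$ factors into geometric series of ratio $\theta_0/\tau^{\alpha_i}$ (for the $A_i$ terms) and $\theta_0$ (for the $B_i$ terms).

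The key choice is to pick $\tau\in(0,1)$ close enough to $1$ so that $\theta_0/\tau^{\alpha_i}<1$ for every $i$ with $\alpha_i>0$; this is possible because $\theta_0<1$, and it suffices to take $\tau>\theta_0^{1/\alpha_{\max}}$ where $\alpha_{\max}=\max_i\alpha_i$. For indices with $\alpha_i=0$, the corresponding terms are already constants absorbed into the $B_i$-type series with ratio $\theta_0$, which likewise converges. Once $\tau$ is fixed, all geometric series are summable and bounded by constants depending only on $\theta_0$ and the $\alpha_i$'s.

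Finally, I would send $n\to\infty$. Since $f$ is bounded on $[r_0,r_1]$ and $\theta_0<1$, the remainder $\theta_0^n f(t_n)$ vanishes, yielding
\begin{equation*}
f(s)\leq C\sum_{i=1}^{m}\left(\frac{A_i}{(t-s)^{\alpha_i}}+B_i\right),
\end{equation*}
with $C=C(\theta_0,\alpha_1,\ldots,\alpha_m)$, as desired. The only delicate point is the simultaneous choice of $\tau$ that makes every geometric series with ratio $\theta_0/\tau^{\alpha_i}$ convergent; this is a purely algebraic selection and presents no real difficulty. Boundedness of $f$ is essential only to discard the $\theta_0^n f(t_n)$ tail—without it, one would need an additional a priori control on $f$.
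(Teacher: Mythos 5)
Your proof is correct and is exactly the standard hole-filling iteration argument for Giaquinta's lemma that the paper invokes by citation (to Giaquinta and to Chae--Lee), including the right geometric choice $t_{k+1}-t_k=(1-\tau)\tau^k(t-s)$ and the selection $\tau^{\alpha_{\max}}>\theta_0$ so that every series with ratio $\theta_0\tau^{-\alpha_i}$ converges. No issues; this matches the intended proof.
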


In order to establish the Liouville type results, we need to estimate several integrals involving $u$, $\omega$ and $b$.
Denote
\begin{align*}
&J_1=\frac{1}{(t-s)^2}\int_{B_t \backslash B_s}(|u|^2+|b|^2) d x,\;J_2=\frac{1}{(t-s)^2}\int_{B_t \backslash B_s}|\omega|^2d x,\\
&J_3=\frac{1}{t-s} \int_{B_t \backslash B_\frac{3R}{2}}|u|^3 d x,\;J_4=\frac{1}{t-s}\int_{B_t \backslash B_s}|\omega|^2|u|dx,\\
&J_5=\frac{1}{t-s}\|b\|_{L^{2p'}(B_t\backslash B_{\frac{3R}{2}})}^2\|u\|_{L^p(B_t\backslash B_{\frac{3R}{2}})}.
\end{align*}
The estimates on $J_1$, $J_2$, $J_3$, $J_4$, $J_5$ are given in the next five lemmas.

\begin{Lem}\label{Lem2.4}
Let $\sqrt{3}R\leq s<t\leq 2R$ and $p,r\geq1$. Suppose that $u,b$ are smooth vector-valued functions. Then
\begin{itemize}
\item[(i)] For any $\varepsilon,\delta>0$, there exist positive constants $C$, $C_\varepsilon$ and $C_\delta$ such that
\begin{align}\label{ine2.3}
\aligned
J_1\leq&\varepsilon\|u\|_{L^{6}(B_t\backslash B_s)}^{2}+\frac{C_\varepsilon R^{\frac{6}{p}-1}}{(t-s)^{\frac{12}{p}-2}}\|u\|_{L^p(A_R)}^2+\frac{CR^{3-\frac{6}{p}}}{(t-s)^2}\|u\|_{L^p (A_R)}^2+\\
&\delta\|b\|_{L^{6}(B_t\backslash B_s)}^{2}+\frac{C_\delta R^{\frac{6}{r}-1}}{(t-s)^{\frac{12}{r}-2}}\|b\|_{L^r(A_R)}^2+\frac{CR^{3-\frac{6}{r}}}{(t-s)^2}\|b\|_{L^{r}(A_R)}^{2}.
\endaligned
\end{align}
\item[(ii)] It holds that
\begin{align}\label{ine2.4}
\aligned
J_1\leq\frac{CR^2}{(t-s)^2}\left(\|u\|_{L^6(A_R)}^2+\|b\|_{L^6(A_R)}^2\right).
\endaligned
\end{align}
\end{itemize}
\end{Lem}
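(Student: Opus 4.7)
My plan is to prove both parts by interpolating the $L^2$-norms of $u$ and $b$ on the annular region $E := B_t \setminus B_s$ between $L^p$ (resp.\ $L^r$) and $L^6$, and then balancing with Young's inequality. The key structural observation is that the bound in (i) really encodes two regimes: a genuine interpolation regime when the integrability index is below $2$, and a direct-H\"older regime when it is above $2$. This dichotomy is exactly what forces (i) to carry three summands per field, whereas (ii) only needs the H\"older step.

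For part (i), I first note that $E \subset A_R$ since $3R/2 \le s < t \le 2R$. When $1 \le p \le 2$, Lebesgue interpolation gives
$$\|u\|_{L^2(E)}^2 \le \|u\|_{L^p(E)}^{2\theta}\,\|u\|_{L^6(E)}^{2(1-\theta)}, \qquad \theta = \frac{2p}{6-p},$$
and Young's inequality with conjugate exponents $(1/\theta,\,1/(1-\theta))$, absorbing the prefactor $(t-s)^{-2}$ entirely into the $L^p$-piece, produces
$$\frac{1}{(t-s)^2}\|u\|_{L^2(E)}^2 \le \varepsilon\|u\|_{L^6(E)}^2 + \frac{C_\varepsilon}{(t-s)^{6/p-1}}\|u\|_{L^p(A_R)}^2,$$
where the exponent $6/p-1 = 2/\theta$ is forced by the calculation. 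For $p \ge 2$, direct H\"older combined with $|E| \le |B_{2R}| \le CR^3$ yields
$$\|u\|_{L^2(E)}^2 \le |E|^{1-2/p}\|u\|_{L^p(A_R)}^2 \le CR^{3-6/p}\|u\|_{L^p(A_R)}^2,$$
which produces the third summand. Since both bounds hold in their respective regimes and each right-hand side is nonnegative, summing them gives a single inequality valid for all $p \ge 1$. The identical argument applied to $b$ with $(r,\delta)$ in place of $(p,\varepsilon)$ yields \eqref{ine2.3} after adding the two estimates.

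For part (ii), I simply apply H\"older using the trivial embedding $L^6(E) \subset L^2(E)$:
$$\|u\|_{L^2(E)}^2 \le |E|^{2/3}\|u\|_{L^6(E)}^2 \le CR^2\|u\|_{L^6(A_R)}^2,$$
and the identical bound for $b$; dividing by $(t-s)^2$ and summing proves \eqref{ine2.4}. The main technical obstacle throughout is exponent bookkeeping: one must verify that the Young split produces precisely $(t-s)^{-(6/p-1)}$ on the $L^p$-piece and that the H\"older bound on $|E|$ produces precisely $R^{3-6/p}$, so that the two regimes glue into a single unified estimate. Once this arithmetic is in hand, no further ideas are required — the proof is simply the cleanest way to express the interpolation--plus--H\"older dichotomy compactly enough to feed into the iteration scheme of Lemma \ref{Lem2.3}.
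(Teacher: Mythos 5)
Your proof is correct: the two-regime argument (interpolation of $L^2$ between $L^p$ and $L^6$ with $\theta=\frac{2p}{6-p}$ plus Young's inequality for $1\le p\le 2$, giving exactly the exponent $\frac{2}{\theta}=\frac{6}{p}-1$, and direct H\"older with $|B_t\setminus B_s|\le CR^3$ for $p\ge 2$, likewise for $b$ with $r$, and plain H\"older for \eqref{ine2.4}) establishes both parts, using that $B_t\setminus B_s\subset A_R\cap(B_t\setminus B_R)$. The paper itself does not prove this lemma but cites it from \cite{ZZB25}, and your argument is the standard one that the structure of \eqref{ine2.3} reflects, so there is nothing to flag.
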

\begin{proof}
Denote
\begin{align*}
J_{11}=\frac{1}{(t-s)^2}\int_{B_t\backslash B_s}|u|^2 dx,\;J_{12}=\frac{1}{(t-s)^2}\int_{B_t\backslash B_s}|b|^2dx,
\end{align*}
then $J_1=J_{11}+J_{12}$. Applying the H\"{o}lder inequality to $J_{11}$, we have
\begin{align}\label{add-ineq2}
J_{11}\leq\frac{CR}{(t-s)^2}\|u\|_{L^3(B_t\backslash B_s)}^2.
\end{align}
When $1\leq p<3$, substituting the following interpolation inequality
$$
\|u\|_{L^3}\leq \|u\|_{L^p}^\frac{p}{6-p}\|u\|_{L^6}^\frac{6-2p}{6-p}
$$
to \eqref{add-ineq2} and using the Young inequality, we obtain
\begin{align}\label{add-ineq3}
\aligned
J_{11}&\leq\frac{CR}{(t-s)^2}\|u\|_{L^p(B_t\backslash B_s)}^\frac{2p}{6-p}\|u\|_{L^6(B_t\backslash B_s)}^\frac{12-4p}{6-p}\\
&\leq\varepsilon\|u\|_{L^6(B_t\backslash B_s)}^2+\frac{C_\varepsilon R^{\frac{6}{p}-1}}{(t-s)^{\frac{12}{p}-2}}\|u\|_{L^p(A_R)}^2.
\endaligned
\end{align}
When $p\geq3$, applying the H\"{o}lder inequality to \eqref{add-ineq2}, we find that
\begin{align}\label{add-ineq4}
J_{11}\leq\frac{CR^{3-\frac{6}{p}}}{(t-s)^2}\|u\|_{L^p (A_R)}^2.
\end{align}
Combining \eqref{add-ineq3} and \eqref{add-ineq4}, we conclude that $J_{11}$ can be controlled by
$$
J_{11}\leq \varepsilon\|u\|_{L^6(B_t\backslash B_s)}^2+\frac{C_\varepsilon R^{\frac{6}{p}-1}}{(t-s)^{\frac{12}{p}-2}}\|u\|_{L^p(A_R)}^2+\frac{CR^{3-\frac{6}{p}}}{(t-s)^2}\|u\|_{L^p (A_R)}^2.
$$
Similarly, we have
$$
J_{12}\leq \delta\|b\|_{L^6(B_t\backslash B_s)}^2+\frac{C_\delta R^{\frac{6}{r}-1}}{(t-s)^{\frac{12}{r}-2}}\|b\|_{L^r(A_R)}^2+\frac{CR^{3-\frac{6}{r}}}{(t-s)^2}\|b\|_{L^r (A_R)}^2.
$$

Applying the H\"{o}lder inequality to $J_{11}$ and $J_{12}$, we can get \eqref{ine2.4}.
\end{proof}

\begin{Lem}\label{Lemma2.4}
Let $\sqrt{3}R\leq s<t\leq 2R$. Suppose that $\omega$ is a smooth vector-valued function. Define $h_1(q):=1$ for $1\leq q<2$ and $h_1(q):=0$ for $2\leq q<+\infty$. Then
\begin{itemize}
\item[(i)] Let $1\leq q<+\infty$. For any $\varepsilon,\delta>0$, $\theta\in (0,1)$, there exist positive constants $C_{\varepsilon,\theta}$ and $C_{\varepsilon,\delta,\theta}$ such that
\begin{align}\label{later2.5}
\aligned
J_2&\leq\varepsilon\|\omega\|_{L^2(B_t\backslash B_s)}^2+\delta\|\omega\|_{L^6(B_t\backslash B_s)}^2+\left(\frac{C_{\varepsilon,\delta,\theta}h_1(q)}{(t-s)^\frac{6-q}{q(1-\theta)}}+\frac{C_{\varepsilon,\theta} R^{3-\frac{6}{q}}}{(t-s)^\frac{2}{1-\theta}}\right)\|\omega\|_{L^q(A_R)}^2.
\endaligned
\end{align}
\item[(ii)] Let $q=+\infty$. For any $\varepsilon,\delta>0$, $\theta\in (0,1)$, there exists a positive constant $C_{\varepsilon,\theta}$ such that
\begin{align}\label{later2.6}
\aligned
J_2&\leq\varepsilon\|\omega\|_{L^2(B_t\backslash B_s)}^2+\frac{C_{\varepsilon,\theta}}{(t-s)^\frac{2}{1-\theta}}R^3\|\omega\|_{L^\infty(A_R)}^2.
\endaligned
\end{align}
\item[(iii)] It holds that
\begin{align}\label{later2.7}
J_2\leq\frac{1}{(t-s)^2}\|\omega\|_{L^2(A_R)}^2.
\end{align}
\end{itemize}
\end{Lem}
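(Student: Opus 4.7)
The plan is to prove the three claims by a single interpolation-plus-weighted-Young scheme. The core idea is to split $\|\omega\|_{L^2(B_t\backslash B_s)}^{2}=\|\omega\|_{L^2(B_t\backslash B_s)}^{2(1-\theta)}\cdot\|\omega\|_{L^2(B_t\backslash B_s)}^{2\theta}$, bound the $\theta$-portion by an inequality in terms of $\|\omega\|_{L^q(A_R)}$ (and, when $q<2$, also $\|\omega\|_{L^6(B_t\backslash B_s)}$), and then invoke a multi-term weighted Young inequality, distributing the singular factor $(t-s)^{-2}$ entirely onto the $L^q$ piece so that the $L^2$ and $L^6$ pieces appear with constants $\varepsilon$, $\delta$ that can be chosen arbitrarily small. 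Part (iii) is immediate, since $\frac{3R}{2}\leq s<t\leq 2R$ forces $B_t\backslash B_s\subset A_R$.

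For part (i) with $q\in[2,+\infty)$ and for part (ii), $q=+\infty$, H\"older on $B_t\backslash B_s$ gives $\|\omega\|_{L^2(B_t\backslash B_s)}^{2}\leq CR^{3-\frac{6}{q}}\|\omega\|_{L^q(A_R)}^{2}$ (with $R^{3}$ replacing $R^{3-6/q}$ when $q=\infty$). I apply this bound to the factor $\|\omega\|_{L^2(B_t\backslash B_s)}^{2\theta}$ only, put the whole $(t-s)^{-2}$ onto the resulting $L^q$ piece as $(t-s)^{-2/\theta}$, and conclude by two-term weighted Young with exponents $1/(1-\theta)$, $1/\theta$; this produces $\varepsilon\|\omega\|_{L^2(B_t\backslash B_R)}^{2}+C_{\varepsilon,\theta}R^{3-\frac{6}{q}}(t-s)^{-2/\theta}\|\omega\|_{L^q(A_R)}^{2}$, and relabeling $\theta\mapsto 1-\theta$ yields the stated exponent $2/(1-\theta)$. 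Since $h_{1}(q)=0$ in this range, the $\delta\|\omega\|_{L^6(B_t\backslash B_R)}^{2}$ term is just retained trivially. For part (i) with $q\in[1,2)$, H\"older is not available in the needed direction, so I instead interpolate $\|\omega\|_{L^2(B_t\backslash B_s)}\leq\|\omega\|_{L^q(B_t\backslash B_s)}^{1-\lambda}\|\omega\|_{L^6(B_t\backslash B_s)}^{\lambda}$ with $\lambda=\frac{3(2-q)}{6-q}$ (so $1-\lambda=\frac{2q}{6-q}$), apply this to the $\theta$-portion only, and then use three-term weighted Young with exponents $1-\theta$, $\theta(1-\lambda)$, $\theta\lambda$ (which sum to $1$), placing the full $(t-s)^{-2}$ on the $L^q$ factor. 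This forces an $L^q$ exponent $\frac{2}{\theta(1-\lambda)}=\frac{6-q}{q\theta}$, which after relabeling becomes the stated $\frac{6-q}{q(1-\theta)}$.

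The main technical point is the bookkeeping of the singular factor $(t-s)^{-2}$ in the Young step: it has to be put entirely onto the $L^q(A_R)$ factor, so that the $L^2$ and $L^6$ pieces emerge without any negative power of $(t-s)$ and can therefore be absorbed by arbitrarily small constants $\varepsilon$, $\delta$. The three independent free parameters $\varepsilon$, $\delta$, $\theta$ give exactly the flexibility needed to decouple the three contributions and produce the constants $C_{\varepsilon,\delta,\theta}$, $C_{\varepsilon,\theta}$ in the stated form; everything else is routine substitution of the interpolation exponent $\lambda$.
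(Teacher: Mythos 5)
Your proof is correct and follows essentially the same route as the paper: split $\|\omega\|_{L^2}^2$ into a $\theta$-part and a $(1-\theta)$-part, control the part carrying the singular factor $(t-s)^{-2}$ via H\"older (for $q\ge 2$ or $q=\infty$) or the $L^q$--$L^6$ interpolation (for $q<2$), and absorb the remaining $L^2$ and $L^6$ factors with weighted Young, which is exactly the paper's argument up to the order of the interpolation/Young steps and the harmless relabeling $\theta\mapsto 1-\theta$.
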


\begin{proof}
By the Young inequality, we obtain
\begin{align}\label{last2.8}
\aligned
J_2&=\frac{1}{(t-s)^2}\|\omega\|_{L^2(B_t\backslash B_s)}^{2\theta}\|\omega\|_{L^2(B_t\backslash B_s)}^{2(1-\theta)}\\
&\leq\varepsilon\|\omega\|_{L^2(B_t\backslash B_s)}^2+\frac{C_{\varepsilon,\theta}}{(t-s)^\frac{2}{1-\theta}}\|\omega\|_{L^2(B_t\backslash B_s)}^2.
\endaligned
\end{align}
When $1\leq q<2$, using the interpolation inequality
$$
\|\omega\|_{L^2(B_t\backslash B_s)}\leq \|\omega\|_{L^q(B_t\backslash B_s)}^\frac{2q}{6-q}\|\omega\|_{L^6(B_t\backslash B_s)}^\frac{6-3q}{6-q}
$$
and the Young inequality, we have
\begin{align}\label{later2.8}
\aligned
J_2&\leq\varepsilon\|\omega\|_{L^2(B_t\backslash B_s)}^2+\frac{C_{\varepsilon,\theta}}{(t-s)^\frac{2}{1-\theta}}\|\omega\|_{L^q(B_t\backslash B_s)}^\frac{4q}{6-q}\|\omega\|_{L^6(B_t\backslash B_s)}^\frac{12-6q}{6-q}\\
&\leq\varepsilon\|\omega\|_{L^2(B_t\backslash B_s)}^2+\delta\|\omega\|_{L^6(B_t\backslash B_s)}^2+\frac{C_{\varepsilon,\delta,\theta}}{(t-s)^\frac{6-q}{q(1-\theta)}}\|\omega\|_{L^q(B_t\backslash B_s)}^2.
\endaligned
\end{align}
When $2\leq q<+\infty$, combining \eqref{last2.8} and the H\"{o}lder inequality, we get
\begin{align}\label{later2.9}
J_2\leq\varepsilon\|\omega\|_{L^2(B_t\backslash B_s)}^2+\frac{C_{\varepsilon,\theta}}{(t-s)^\frac{2}{1-\theta}}R^{3-\frac{6}{q}}\|\omega\|_{L^q(B_t\backslash B_s)}^2.
\end{align}
Taking \eqref{later2.8} and \eqref{later2.9} into account together, we conclude that \eqref{later2.5} holds.

Substituting the following inequality
$$
\|\omega\|_{L^q(B_t\backslash B_s)}\leq CR^\frac{3}{q}\|\omega\|_{L^\infty(A_R)}
$$
into \eqref{later2.9}, we obtain \eqref{later2.6}. It is obvious that \eqref{later2.7} holds.
\end{proof}

\begin{Lem}\label{Lem2.5}
Let $\sqrt{3}R\leq s<t\leq 2R$. Suppose that $u$ is a smooth vector-valued function. Then we have the following conclusions:
\begin{itemize}
\item[(i)]  Assume $p\in\left(\frac{3}{2},3\right)$. It holds that
\begin{align}\label{ine2.6}
J_3\leq \frac{1}{t-s}\|u\|_{L^p(B_t\backslash B_{\frac{3R}{2}})}^{\frac{3p}{6-p}}\|u\|_{L^6(B_t\backslash B_{\frac{3R}{2}})}^{\frac{18-6p}{6-p}}.
\end{align}
\item[(ii)] Assume $p\in\left(\frac{3}{2},3\right)$. For any $\varepsilon>0$, there exists a positive constant $C_\varepsilon$ such that
\begin{align}\label{ine2.7}
\aligned
J_3&\leq\varepsilon\|u\|_{L^6(B_t\backslash B_{\frac{3R}{2}})}^2+\frac{C_\varepsilon}{(t-s)^\frac{6-p}{2p-3}}\|u\|_{L^p(A_R)}^\frac{3p}{2p-3}.
\endaligned
\end{align}
\item[(iii)]  Assume $p\geq3$. It holds that
\begin{align}\label{ine2.5}
J_3\leq\frac{C}{t-s}R^{3-\frac{9}{p}}\|u\|_{L^{p}(A_R)}^3.
\end{align}
\end{itemize}
\end{Lem}

\begin{proof}
By the H\"{o}lder inequality, we obtain
\begin{align}\label{add-ineq1}
J_3=\frac{1}{t-s}\int_{B_t\backslash B_{\frac{3R}{2}}}|u|^2\cdot|u|dx\leq\frac{1}{t-s}\|u\|_{L^{2p'}(B_t\backslash B_{\frac{3R}{2}})}^2\|u\|_{L^p(B_t\backslash B_{\frac{3R}{2}})}.
\end{align}
When $p\in(\frac{3}{2},3)$, we have $p<2p'<6$. Substituting the following interpolation inequality
$$
\|u\|_{L^{2p'}}\leq \|u\|_{L^p}^\frac{2p-3}{6-p}\|u\|_{L^6}^\frac{9-3p}{6-p}
$$
into \eqref{add-ineq1}, we get \eqref{ine2.6}. Applying the Young inequality to \eqref{ine2.6}, we can derive that \eqref{ine2.7} holds.

When $p\geq3$, we have $2p'\leq p$. Applying the H\"{o}lder inequality to \eqref{add-ineq1}, we obtain
\begin{align*}
J_3\leq\frac{1}{t-s}\left(\|u\|_{L^p(B_t\backslash B_{\frac{3R}{2}})}\cdot CR^{\frac{3}{2}-\frac{9}{2p}}\right)^2\|u\|_{L^p(B_t\backslash B_{\frac{3R}{2}})}\leq\frac{C}{t-s}R^{3-\frac{9}{p}}\|u\|_{L^{p}(A_R)}^3.
\end{align*}
\end{proof}

\begin{Lem}\label{Lemma2.6}
Let $\sqrt{3}R\leq s<t\leq 2R$. Suppose that $u,\omega$ are smooth vector-valued functions. Define $h_2(q):=1$ for $1\leq q<2p'$ and $h_2(q):=0$ for $2p'\leq q<+\infty$. Define $h_3(q):=1-h_2(q)$. Then
\begin{itemize}
\item[(i)] Let $\frac{3}{2}<p\leq\frac{9}{2}$, $1\leq q<+\infty$. For any $\varepsilon,\delta>0$, $\theta\in (0,1)$, there exist a positive constant $C_{\varepsilon,\delta,\theta}$ such that
\begin{align}\label{later2.13}
\aligned
J_4\leq&\varepsilon\|\omega\|_{L^2(B_t\backslash B_s)}^2+C_{\varepsilon,\delta,\theta}h_2(q)\left(\frac{1}{t-s}\|\omega\|_{L^q(A_R)}^{\frac{(6-2p')q}{(6-q)p'}(1-\theta)}\|u\|_{L^p(A_R)}\right)^{\frac{(6-q)p'}{(3-p')q(1-\theta)}}+\\
&\delta\|\omega\|_{L^6(B_t\backslash B_s)}^2+C_{\varepsilon,\delta,\theta}h_3(q)\left(\frac{1}{t-s}\|\omega\|_{L^q(A_R)}^{\frac{2q}{(q-2)p}(1-\theta)}\|u\|_{L^p(A_R)}\right)^\frac{(q-2)p}{(1-\theta)q}.
\endaligned
\end{align}
\item[(ii)] Let $\frac{3}{2}<p\leq\frac{9}{2}$, $q=+\infty$. For any $\varepsilon,\delta>0$, $\theta\in (0,1)$, there exists a positive constant $C_{\varepsilon,\delta,\theta}$ such that
\begin{align}\label{later2.14}
\aligned
J_4\leq&\varepsilon\|\omega\|_{L^2(B_t\backslash B_s)}^2+\delta\|\omega\|_{L^6(B_t\backslash B_s)}^2+C_{\varepsilon,\delta,\theta}\left(\frac{R^{\frac{3}{2p}(1-\theta)}}{t-s}\|\omega\|_{L^\infty(A_R)}^{\frac{3}{p}(1-\theta)}\|u\|_{L^p(A_R)}\right)^\frac{2p}{3(1-\theta)}.
\endaligned
\end{align}
\item[(iii)] Let $\frac{3}{2}<p\leq\frac{9}{2}$. It holds that
\begin{align}\label{ine-a3}
\aligned
J_4&\leq\frac{1}{t-s}\|u\|_{L^p(A_R)}\|\omega\|_{L^2(A_R)}^{2-\frac{3}{p}}\|\omega\|_{L^6(A_R)}^\frac{3}{p}.
\endaligned
\end{align}
\end{itemize}
\end{Lem}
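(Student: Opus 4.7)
The plan is to start all three parts from the same H\"older estimate, valid because $B_t\setminus B_s\subset A_R$ when $\frac{3R}{2}\le s<t\le 2R$:
\begin{equation*}
J_4 \leq \frac{1}{t-s}\|u\|_{L^p(A_R)}\|\omega\|_{L^{2p'}(A_R)}^2.
\end{equation*}
The hypothesis $p>3/2$ guarantees $2<2p'<6$, so the Gagliardo--Nirenberg interpolation
$\|\omega\|_{L^{2p'}}\leq \|\omega\|_{L^2}^{\mu_1}\|\omega\|_{L^6}^{1-\mu_1}$ with $\mu_1=\frac{2p-3}{2p}$
is available. Squaring and substituting yields part (iii) immediately, since $2\mu_1=2-3/p$ and $2(1-\mu_1)=3/p$.

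For part (ii), I would follow this interpolation with a $\theta$-split of the $L^6$ factor,
$\|\omega\|_{L^6}^{3/p}=\|\omega\|_{L^6}^{3\theta/p}\,\|\omega\|_{L^6}^{3(1-\theta)/p}$,
and dominate the second piece by the trivial volume estimate $\|\omega\|_{L^6(A_R)}\leq C R^{1/2}\|\omega\|_{L^\infty(A_R)}$, producing $\|\omega\|_{L^6(A_R)}^{3(1-\theta)/p}\leq CR^{3(1-\theta)/(2p)}\|\omega\|_{L^\infty(A_R)}^{3(1-\theta)/p}$. A three-factor Young inequality with conjugate exponents $\bigl(\tfrac{2p}{2p-3},\tfrac{2p}{3\theta},\tfrac{2p}{3(1-\theta)}\bigr)$ (whose reciprocals sum to $1$) then absorbs $\|\omega\|_{L^2}^2$ and $\|\omega\|_{L^6}^2$ with coefficients $\varepsilon$ and $\delta$, leaving the remaining factor raised to the power $\tfrac{2p}{3(1-\theta)}$, which is exactly the form \eqref{later2.14}.

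For part (i) the $\theta$-split is instead performed at the level of $\|\omega\|_{L^{2p'}}^2$ itself:
$\|\omega\|_{L^{2p'}}^2 = \|\omega\|_{L^{2p'}}^{2\theta}\|\omega\|_{L^{2p'}}^{2(1-\theta)}$.
The first factor is interpolated between $L^2$ and $L^6$ as above. The second factor is interpolated in a $q$-dependent fashion: when $1\leq q<2p'$ (so $h_2(q)=1$) I take $\|\omega\|_{L^{2p'}}\leq \|\omega\|_{L^q}^{a}\|\omega\|_{L^6}^{1-a}$ with $a=\tfrac{(2p-3)q}{p(6-q)}$, and when $2p'\leq q<\infty$ (so $h_3(q)=1$) I take $\|\omega\|_{L^{2p'}}\leq \|\omega\|_{L^2}^{\nu}\|\omega\|_{L^q}^{1-\nu}$ with $1-\nu=\tfrac{q}{p(q-2)}$. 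In each subcase a three-factor Young inequality with reciprocals $\bigl(\theta\mu_1,\ \theta(1-\mu_1)+(1-\theta)(1-a),\ (1-\theta)a\bigr)$, or the analogous triple $\bigl(\theta\mu_1+(1-\theta)\nu,\ \theta(1-\mu_1),\ (1-\theta)(1-\nu)\bigr)$ in the $h_3$ case, converts two of the factors into $\varepsilon\|\omega\|_{L^2}^2+\delta\|\omega\|_{L^6}^2$ while the third is raised to the exponent $\tfrac{1}{a(1-\theta)}$ or $\tfrac{p(q-2)}{q(1-\theta)}$; after simplification these are exactly the exponents $\tfrac{(6-q)p'}{(3-p')q(1-\theta)}$ and $\tfrac{(q-2)p}{(1-\theta)q}$ appearing in \eqref{later2.13}.

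The main obstacle is careful parameter bookkeeping: one must check that the interpolation exponents $\mu_1,a,\nu$ all lie in $[0,1]$ on the prescribed ranges (which reduces to the algebraic conditions $q\leq 2p'$ and $q\geq 2p'$, respectively), and verify at each application of Young's inequality that the three reciprocal exponents sum to exactly $1$. No analytic tool beyond H\"older, Gagliardo--Nirenberg interpolation, and Young's inequality is required.
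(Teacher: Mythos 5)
Your route is essentially the paper's: H\"older to pull out $\frac{1}{t-s}\|u\|_{L^p}\|\omega\|_{L^{2p'}}^2$, then the $\theta$-split of $\|\omega\|_{L^{2p'}}^2$, the interpolation $\|\omega\|_{L^{2p'}}\le\|\omega\|_{L^2}^{1-\frac{3}{2p}}\|\omega\|_{L^6}^{\frac{3}{2p}}$ combined with the $q$-dependent interpolation ($L^q$--$L^6$ when $q<2p'$, $L^2$--$L^q$ when $q\ge 2p'$), and a three-factor Young inequality; your exponents $\mu_1$, $a$, $\nu$ and the resulting powers $\frac{1}{a(1-\theta)}=\frac{(6-q)p'}{(3-p')q(1-\theta)}$ and $\frac{p(q-2)}{q(1-\theta)}$ all check out, as does the $q=\infty$ case via $\|\omega\|_{L^6(A_R)}\le CR^{1/2}\|\omega\|_{L^\infty(A_R)}$ (the paper gets the same thing by setting $q=6$ in its second estimate before passing to $L^\infty$).

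The one slip is domain bookkeeping: by enlarging every norm to $A_R$ already in the opening H\"older step, your Young inequality produces $\varepsilon\|\omega\|_{L^2(A_R)}^2+\delta\|\omega\|_{L^6(A_R)}^2$, whereas \eqref{later2.13} and \eqref{later2.14} assert these terms with norms on the smaller set $B_t\backslash B_R$; since $B_t\backslash B_R\subset A_R$, your inequality is strictly weaker than the stated one. This is not cosmetic for the way the lemma is used: in the Caccioppoli estimate the $\varepsilon$- and $\delta$-terms must be absorbed into $\frac12 f(t)$ with the running radius $t$ so that the Giaquinta iteration (Lemma \ref{Lem2.3}) applies, and $\|\omega\|_{L^2(A_R)}^2$ is only controlled by $f(2R)$, not by $f(t)$. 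The fix is immediate and is what the paper does: apply H\"older and the interpolations on $B_t\backslash B_s$ (or $B_t\backslash B_R$), and only enlarge the $L^q(\omega)$- and $L^p(u)$-factors, which appear with positive powers in the final constant term, to $A_R$ at the end.
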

\begin{proof}
By the H\"{o}lder inequality, we obtain
\begin{align}\label{later2.16}
J_4\leq\frac{1}{t-s}\|\omega\|_{L^{2p'}(B_t\backslash B_s)}^2\|u\|_{L^p(B_t\backslash B_s)}.
\end{align}
When $\frac{3}{2}<p\leq\frac{9}{2}$, $1\leq q<2p'$, using \eqref{later2.16} and the following interpolation inequalities
$$
\|\omega\|_{L^{2p'}(B_t\backslash B_s)}\leq \|\omega\|_{L^q(B_t\backslash B_s)}^\frac{(3-p')q}{(6-q)p'}\|\omega\|_{L^6(B_t\backslash B_s)}^\frac{6p'-3q}{(6-q)p'},
$$
\begin{align}\label{later2.17}
\|\omega\|_{L^{2p'}(B_t\backslash B_s)}\leq \|\omega\|_{L^2(B_t\backslash B_s)}^{1-\frac{3}{2p}}\|\omega\|_{L^6(B_t\backslash B_s)}^\frac{3}{2p},
\end{align}
we obtain
$$
\aligned
J_4&\leq\frac{1}{t-s}\|\omega\|_{L^{2p'}(B_t\backslash B_s)}^{2(1-\theta)}\|\omega\|_{L^{2p'}(B_t\backslash B_s)}^{2\theta}\|u\|_{L^p(B_t\backslash B_s)}\\
&\leq \frac{1}{t-s}\|\omega\|_{L^2(B_t\backslash B_s)}^{(2-\frac{3}{p})\theta}\|\omega\|_{L^6(B_t\backslash B_s)}^{\frac{12p'-6q}{(6-q)p'}(1-\theta)+\frac{3}{p}\theta}\|\omega\|_{L^q(B_t\backslash B_s)}^{\frac{(6-2p')q}{(6-q)p'}(1-\theta)}\|u\|_{L^p(B_t\backslash B_s)}.
\endaligned
$$
For any $a_1,a_2,a_3\geq0$, and any $\varepsilon,\delta>0$, it holds that
$$
a_1a_2a_3\leq\varepsilon a_1^{p_1}+\delta a_2^{p_2}+C_{\varepsilon,\delta} a_3^{p_3},\text{ where }p_1,p_2,p_3\in(1,+\infty)\text{ satisfy }\sum_{i=1}^3\frac{1}{p_i}=1.
$$
By the way, we point out that the above constant $C_{\varepsilon,\delta}$ actually also depends on $p_1$, $p_2$, $p_3$.
By the above Young inequality, we conclude that
\begin{align}\label{later2.18}
J_4\leq&\varepsilon\|\omega\|_{L^2(B_t\backslash B_s)}^2+\delta\|\omega\|_{L^6(B_t\backslash B_s)}^2+C_{\varepsilon,\delta,\theta}\left(\frac{1}{t-s}\|\omega\|_{L^q(B_t\backslash B_s)}^{\frac{(6-2p')q}{(6-q)p'}(1-\theta)}\|u\|_{L^p(B_t\backslash B_s)}\right)^{\frac{(6-q)p'}{(3-p')q(1-\theta)}}.
\end{align}

When $\frac{3}{2}<p\leq\frac{9}{2}$, $2p'\leq q<+\infty$, combining \eqref{later2.16}, the following interpolation inequality
$$
\|\omega\|_{L^{2p'}(B_t\backslash B_s)}\leq \|\omega\|_{L^2(B_t\backslash B_s)}^\frac{pq-2p-q}{(q-2)p}\|\omega\|_{L^q(B_t\backslash B_s)}^\frac{q}{(q-2)p}
$$
and \eqref{later2.17}, we derive that
$$
\aligned
J_4&\leq\frac{1}{t-s}\|\omega\|_{L^{2p'}(B_t\backslash B_s)}^{2(1-\theta)}\|\omega\|_{L^{2p'}(B_t\backslash B_s)}^{2\theta}\|u\|_{L^p(B_t\backslash B_s)}\\
&\leq \frac{1}{t-s}\|\omega\|_{L^q(B_t\backslash B_s)}^{\frac{2q}{(q-2)p}(1-\theta)}\|\omega\|_{L^2(B_t\backslash B_s)}^{\frac{2pq-4p-2q}{(q-2)p}(1-\theta)+(2-\frac{3}{p})\theta}\|\omega\|_{L^6(B_t\backslash B_s)}^{\frac{3}{p}\theta}\|u\|_{L^p(B_t\backslash B_s)}.
\endaligned
$$
Applying the Young inequality to the above inequality, we have
\begin{align}\label{later2.19}
\aligned
J_4\leq&\varepsilon\|\omega\|_{L^2(B_t\backslash B_s)}^2+\delta\|\omega\|_{L^6(B_t\backslash B_s)}^2+C_{\varepsilon,\delta,\theta}\left(\frac{1}{t-s}\|\omega\|_{L^q(B_t\backslash B_s)}^{\frac{2q}{(q-2)p}(1-\theta)}\|u\|_{L^p(B_t\backslash B_s)}\right)^\frac{(q-2)p}{(1-\theta)q}.
\endaligned
\end{align}
Taking \eqref{later2.18} and \eqref{later2.19} into account together, we conclude that \eqref{later2.13} holds.

Setting $q=6$ in \eqref{later2.19}, we get
\begin{align}\label{later2.20}
\aligned
J_4\leq&\varepsilon\|\omega\|_{L^2(B_t\backslash B_s)}^2+\delta\|\omega\|_{L^6(B_t\backslash B_s)}^2+C_{\varepsilon,\delta,\theta}\left(\frac{1}{t-s}\|\omega\|_{L^6(B_t\backslash B_s)}^{\frac{3}{p}(1-\theta)}\|u\|_{L^p(B_t\backslash B_s)}\right)^\frac{2p}{3(1-\theta)}.
\endaligned
\end{align}
Substituting the following inequality
$$
\|\omega\|_{L^6(B_t\backslash B_s)}\leq CR^\frac{1}{2}\|\omega\|_{L^\infty(A_R)}
$$
into \eqref{later2.20}, we derive that \eqref{later2.14} holds.

Obviously, \eqref{ine-a3} is a direct consequence of \eqref{later2.16} and \eqref{later2.17}.
\end{proof}

\begin{Lem}\label{Lem2.7}
Let $\sqrt{3}R\leq s<t\leq 2R$. Suppose that $u,b$ are smooth vector-valued functions. Then
\begin{itemize}
\item[(i)] Let $\frac{3}{2}<p\leq\frac{9}{2}$, $1\leq r<2p'$. It holds that
\begin{align}\label{ine2.14}
J_5&\leq\frac{1}{t-s}\|u\|_{L^p(B_t\backslash B_\frac{3R}{2})}\|b\|_{L^r(B_t\backslash B_{\frac{3R}{2}})}^\frac{2(3-p')r}{(6-r)p'}\|b\|_{L^6(B_t\backslash B_\frac{3R}{2})}^\frac{12p'-6r}{(6-r)p'}.
\end{align}
\item[(ii)] Let $\frac{3}{2}<p\leq\frac{9}{2}$, $1\leq r<2p'$. For any $\delta>0$, there exists a positive constant $C_\delta$ such that
\begin{align}\label{ine2.13}
J_5\leq\delta\|b\|_{L^6(B_t\backslash B_\frac{3R}{2})}^2+\frac{C_\delta}{(t-s)^\frac{(6-r)p'}{(3-p')r}}\|u\|_{L^p(A_R)}^\frac{(6-r)p'}{(3-p')r}\|b\|_{L^r(A_R)}^2.
\end{align}
\item[(iii)] Let $\frac{3}{2}<p\leq\frac{9}{2}$, $2p'\leq r<+\infty$. It holds that
\begin{align}\label{ine2.15}
\aligned
J_5&\leq\frac{C}{t-s}R^{3-\frac{3}{p}-\frac{6}{r}}\|b\|_{L^r(A_R)}^{2}\|u\|_{L^p(A_R)}.
\endaligned
\end{align}
\end{itemize}
\end{Lem}

\begin{proof}
When $\frac{3}{2}<p\leq\frac{9}{2}$ and $1\leq r<2p'$, we have $2p'\in(r,6)$. At this time, the following interpolation inequality holds
\begin{equation*}
\|b\|_{L^{2p'}} \leq\|b\|_{L^r}^\frac{(3-p')r}{(6-r)p'}\|b\|_{L^6}^\frac{6p'-3r}{(6-r)p'},
\end{equation*}
which indicates
\begin{equation*}
\aligned
J_5&\leq\frac{1}{t-s}\|u\|_{L^p(B_t\backslash B_\frac{3R}{2})}\|b\|_{L^r(B_t\backslash B_{\frac{3R}{2}})}^\frac{(6-2p')r}{(6-r)p'}\|b\|_{L^6(B_t\backslash B_{\frac{3R}{2}})}^\frac{12p'-6r}{(6-r)p'}.
\endaligned
\end{equation*}
Applying the Young inequality, we get
$$
J_5\leq\delta\|b\|_{L^6(B_t\backslash B_{\frac{3R}{2}})}^2+\frac{C_\delta}{(t-s)^\frac{(6-r)p'}{(3-p')r}}\|u\|_{L^p(B_t\backslash B_\frac{3R}{2})}^\frac{(6-r)p'}{(3-p')r}\|b\|_{L^r(B_t\backslash B_{\frac{3R}{2}})}^2.
$$

When $\frac{3}{2}<p\leq\frac{9}{2}$ and $2p'\leq r<+\infty$,
applying the H\"{o}lder inequality to $J_5$, we obtain
\begin{equation*}
\aligned
J_5&\leq\frac{C}{t-s}R^{\frac{3}{p'}-\frac{6}{r}}\|b\|_{L^r(B_t\backslash B_\frac{3R}{2})}^{2}\|u\|_{L^p(B_t\backslash B_\frac{3R}{2})}\\
&=\frac{C}{t-s}R^{3-\frac{3}{p}-\frac{6}{r}}\|b\|_{L^r(B_t\backslash B_{\frac{3R}{2}})}^{2}\|u\|_{L^p(B_t\backslash B_\frac{3R}{2})}.
\endaligned
\end{equation*}

\end{proof}

\section{Proof of Theorem \ref{main1}}\label{sec3}
In order to prove Theorem \ref{main1}, we first establish an important energy estimate.
\begin{Lem}
Let $(u,\pi,\omega,b)$ be a smooth solution of \eqref{equ1.1} and $\chi\in (0,2)$, $\sqrt{3}R\leq s<t\leq 2R$. Denote
\begin{align}\label{add-f}
f(\rho)=&\int_{B_\rho}\left(|\nabla u|^2+|\nabla\omega|^2+|\nabla b|^2+|\omega|^2\right)dx+\|u\|_{L^6 (B_\rho)}^2+\|\omega\|_{L^6 (B_\rho)}^2+\|b\|_{L^6 (B_\rho)}^2.
\end{align}
Then it holds that
\begin{align}\label{ine3.12}
f(s)\leq&\frac{1}{2}\int_{B_t\backslash B_{\frac{\sqrt{3}t}{2}}}|\nabla u|^2d x+\frac{1}{4}\int_{B_t\backslash B_{\frac{\sqrt{3}t}{2}}}|\omega|^2d x+C(J_1+J_2+J_3+J_4+J_5),
\end{align}
where $J_1$, $J_2$, $J_3$, $J_4$, $J_5$ are given in Section \ref{sec2}.
\end{Lem}
\begin{proof}
Let $\sqrt{3}R\leq s<t\leq 2R$, then we infer $s\geq\frac{\sqrt{3}t}{2}>\frac{3R}{2}$. We introduce a cut-off function $\eta \in C_0^{\infty}\left(\mathbb{R}^{3}\right)$ satisfying
\begin{align*}
\eta(x)= \begin{cases}1, & |x| <s, \\ 0, & |x| >\frac{s+t}{2},\end{cases}
\end{align*}
with
$$\text{$0\leq\eta (x)\leq 1$, and $\|\nabla \eta\|_{L^{\infty}} \leq \frac{C}{t-s}$, $\|\nabla^2\eta\|_{L^{\infty}} \leq \frac{C}{(t-s)^2}$.}$$
Since
$$\int_{B_t\backslash B_\frac{\sqrt{3}t}{2}}u\cdot\nabla\eta^{2}dx=\int_{B_t}u\cdot\nabla\eta^{2}dx=\int_{B_t}\nabla\cdot (u\eta^{2})dx=\int_{\partial B_t}\frac{x}{|x|}\cdot (u\eta^{2})dS=0,$$
by Lemma \ref{Lem2.2}, there exists $v\in W_{0}^{1,\sigma}(B_t\backslash \overline{B_\frac{\sqrt{3}t}{2}})$ such that $v$ satisfies the following equation
\begin{align}\label{v}
\mathrm{div} v=u\cdot\nabla\eta^{2} \text{ in }B_t\backslash \overline{B_\frac{\sqrt{3}t}{2}},
\end{align}
with the estimate
\begin{align}\label{ine2.2}
\|\nabla v\|_{L^\sigma(B_t\backslash B_\frac{\sqrt{3}t}{2})}\leq C\|u\cdot\nabla\eta^2\|_{L^\sigma(B_t\backslash B_\frac{\sqrt{3}t}{2})}\leq\frac{C}{t-s}\|u\|_{L^\sigma(B_t\backslash B_s)},
\end{align}
for any $1<\sigma<+\infty$. We extend $v$ by zero to $B_\frac{\sqrt{3}t}{2}$, then $v\in W_{0}^{1,\sigma}(B_t).$

Thanks to the divergence free condition for the velocity field, it holds that
$$-\Delta u=\text{\rm curl}^2u.$$
Multiplying both sides of $\eqref{equ1.1}_{1}$ by $u \eta^2-v$, where $v$ is a solution of \eqref{v}, then integrating over $B_t$ and using integration by parts, we obtain
\begin{align}\label{ine3.1}
\aligned
&\int_{B_t}|\curl u|^2\eta^2dx=\int_{B_t}\text{\rm curl}u\cdot\curl vdx-\int_{B_t}\text{\rm curl}u\cdot(\nabla\eta^2\times u)dx\\
+&\int_{B_t}\frac{|u|^2}{2}(u\cdot\nabla\eta^2)dx-\int_{B_t}(u\cdot \nabla)v\cdot udx+\chi \int_{B_t}\omega\cdot(\eta^2\curl u+\nabla\eta^2\times u)dx\\
-&\chi \int_{B_t}\omega\cdot \curl  vdx+\int_{B_t}(b\cdot\nabla)v\cdot b-(b\cdot\nabla)u\cdot b\eta^2-(u\cdot b)(b\cdot\nabla\eta^2)dx.
\endaligned
\end{align}
Since the following identity holds
$$\curl u\times u=(u\cdot\nabla)u-\nabla\left(\frac{|u|^2}{2}\right),$$
we have
\begin{align}\label{ine3.2}
\aligned
\int_{B_t}(\text{\rm curl}u\times u)\cdot\nabla\eta^2dx&=\int_{B_t}\left[(u\cdot\nabla)u-\nabla\left(\frac{|u|^2}{2}\right)\right]\cdot\nabla\eta^2dx\\
&=\int_{B_t}\frac{|u|^2}{2}\Delta\eta^2-(u\otimes u):\nabla^2(\eta^2)dx.
\endaligned
\end{align}
Multiplying both sides of $\eqref{equ1.1}_{2}$ by $\omega \eta^2$, then integrating over $B_t$ and using integration by parts, we obtain
\begin{align}\label{ine3.3}
\aligned
&\int_{B_t}|\nabla\omega|^2\eta^2+|\mathrm{div}\omega|^2\eta^2+2\chi|\omega|^2\eta^2dx\\
=&\int_{B_t}\frac{|\omega|^2}{2}\Delta \eta^2+\frac{|\omega|^2}{2}(u\cdot\nabla\eta^2)dx-\int_{B_t}\mathrm{div}\omega(\omega\cdot\nabla\eta^2)dx+\chi\int_{B_t}\curl  u\cdot \omega\eta^2dx.
\endaligned
\end{align}
Multiplying both sides of $\eqref{equ1.1}_{3}$ by $b \eta^2$, then integrating over $B_t$ and using integration by parts, we obtain
\begin{align}\label{ine3.4}
\int_{B_t}|\nabla b|^2\eta^2dx=\int_{B_t}\frac{|b|^2}{2}\Delta \eta^2+\frac{|b|^2}{2}(u\cdot\nabla\eta^2)dx+\int_{B_t}(b\cdot\nabla)u\cdot b\eta^2dx.
\end{align}
Combining \eqref{ine3.1}, \eqref{ine3.2}, \eqref{ine3.3} and \eqref{ine3.4}, we find that
\begin{align}\label{ine3.5}
&\int_{B_t}\left(|\curl u|^2+|\nabla \omega|^{2}+|\mathrm{div}\omega|^2+|\nabla b|^{2}+2\chi|\omega|^2\right)\eta^2dx\notag\\
=&\int_{B_t}\text{\rm curl}u\cdot\curl vdx+2\chi\int_{B_t}\curl  u\cdot \omega\eta^2dx-\int_{B_t}\mathrm{div}\omega(\omega\cdot\nabla\eta^2)dx\notag\\
&+\chi \int_{B_t}\omega\cdot(\nabla\eta^2\times u)dx-\chi \int_{B_t}\omega\cdot \curl  vdx+\frac{1}{2}\int_{B_t}(|u|^2+|\omega|^2+|b|^2) \Delta \eta^2dx\\
&-\int_{B_t}(u\otimes u):\nabla^2(\eta^2)dx+\frac{1}{2} \int_{B_t}(|u|^2+|\omega|^2+|b|^2) (u \cdot \nabla \eta^2) dx\notag\\
&-\int_{B_t}(u\cdot b)(b\cdot\nabla\eta^2)dx +\int_{B_t}(b\otimes b-u\otimes u): \nabla v  dx.\notag
\end{align}
Let $\varepsilon$ and $\delta$ be two positive constants to be determined. Using the Young inequality, we get
\begin{align}\label{ine3.6}
&2\chi\int_{B_t}\curl  u\cdot \omega\eta^2dx-\int_{B_t}\mathrm{div}\omega(\omega\cdot\nabla\eta^2)dx+\chi \int_{B_t}\omega\cdot(\nabla\eta^2\times u)dx\notag\\
\leq&\varepsilon\int_{B_t}|\curl  u|^2\eta^2dx+\frac{\chi^2}{\varepsilon}\int_{B_t}|\omega|^2\eta^2dx+\frac{1}{2}\int_{B_t}|\mathrm{div}\omega|^2\eta^2dx+2\int_{B_t}|\omega\cdot\nabla\eta|^2dx\\
&+\delta\int_{B_t}|\omega|^2\eta^2dx+\frac{\chi^2}{\delta}\int_{B_t}|\nabla\eta\times u|^2dx.\notag
\end{align}
Combining \eqref{ine3.5} and \eqref{ine3.6}, we can derive that
\begin{align*}
&\int_{B_t}\left[(1-\varepsilon)|\curl u|^2+|\nabla \omega|^{2}+\frac{1}{2}|\mathrm{div}\omega|^2+|\nabla b|^{2}+\left(2\chi-\frac{\chi^2}{\varepsilon}-\delta\right)|\omega|^2\right]\eta^2dx\\
\leq&\int_{B_t}\text{\rm curl}u\cdot\curl vdx-\chi \int_{B_t}\omega\cdot \curl  vdx+\frac{C}{(t-s)^2}\int_{B_t \backslash B_s}(|u|^2+|\omega|^2+|b|^2)dx\\
&+\frac{C}{t-s} \int_{B_t\backslash B_s}(|u|^2+|\omega|^2+|b|^2) |u|dx+\int_{B_t}(|u|^2+|b|^2)|\nabla v| dx.
\end{align*}
Since $\chi\in(0,2)$, we can choose some $\varepsilon$ and $\delta$ satisfying
$$\varepsilon\in\left(\frac{\chi}{2},1\right),\delta\in\left(0,2\chi-\frac{\chi^2}{\varepsilon}\right).$$
Hence, it follows that
\begin{align}\label{ine3.7}
&\int_{B_t}\left(|\curl u|^2+|\nabla \omega|^{2}+|\mathrm{div}\omega|^2+|\nabla b|^{2}+|\omega|^2\right)\eta^2dx\notag\\
\leq&C\int_{B_t\backslash B_{\frac{\sqrt{3}t}{2}}}\left(|\nabla u|\cdot|\nabla v|+|\omega|\cdot|\nabla v|\right)dx+\frac{C}{(t-s)^2}\int_{B_t \backslash B_s}(|u|^2+|\omega|^2+|b|^2)dx\\
&+\frac{C}{t-s} \int_{B_t\backslash B_s}(|u|^2+|\omega|^2+|b|^2) |u|dx+C\int_{B_t}(|u|^2+|b|^2)|\nabla v| dx.\notag
\end{align}
In view of the identity $\text{\rm curl}^2\psi=-\Delta\psi+\nabla(\divg \psi)$ and the divergence free condition, it suffices to get
\begin{align}\label{ine3.8}
\int_{B_t}|\nabla(u\eta)|^2 d x&=\int_{B_t}|\divg (u\eta)|^2 d x+\int_{B_t}|\text{\rm curl}(u\eta)|^2 d x\notag\\
&=\int_{B_t}|u\cdot\nabla\eta|^2 d x+\int_{B_t}|\eta\text{\rm curl}u+\nabla\eta\times u|^2 d x\\
&\leq\int_{B_t}|u\cdot\nabla\eta|^2 d x+2\int_{B_t}|\text{\rm curl}u|^2\eta^2dx+2\int_{B_t}|\nabla\eta\times u|^2 d x.\notag
\end{align}
Using the Gagliardo-Nirenberg inequality and \eqref{ine3.8}, we have
\begin{align}\label{ine3.9}
&\|u \eta\|_{L^6 (B_t)}^2+\|\omega \eta\|_{L^6 (B_t)}^2+\|b \eta\|_{L^6 (B_t)}^2\notag\\
\leq&C\left(\|\nabla(u \eta)\|_{L^2 (B_t)}^2+\|\nabla(\omega \eta)\|_{L^2 (B_t)}^2+\|\nabla(b \eta)\|_{L^2 (B_t)}^2 \right)\notag\\
\leq&C \left(\|\eta \text{\rm curl}u\|_{L^2 (B_t)}^2+\|\eta\nabla \omega\|_{L^2 (B_t)}^2+\|\eta\nabla b\|_{L^2 (B_t)}^2\right)\\
&+C\left(\|u\otimes\nabla \eta\|_{L^2 (B_t)}^2+\|\omega\otimes\nabla \eta\|_{L^2 (B_t)}^2+\|b\otimes\nabla \eta\|_{L^2 (B_t)}^2 \right).\notag
\end{align}

Combining \eqref{ine3.7} and \eqref{ine3.9}, and using the Young inequality, we have
\begin{align*}
&\int_{B_t}\left(|\nabla(u\eta)|^2+|\nabla(\omega\eta)|^2+|\nabla(b\eta)|^2+|\omega \eta|^2\right)dx+\|u \eta\|_{L^6 (B_t)}^2+\|\omega \eta\|_{L^6 (B_t)}^2+\|b\eta\|_{L^6 (B_t)}^2\\
\leq&\frac{1}{2}\int_{B_t\backslash B_{\frac{\sqrt{3}t}{2}}}|\nabla u|^2d x+\frac{1}{4}\int_{B_t\backslash B_{\frac{\sqrt{3}t}{2}}}|\omega|^2d x+C\int_{B_t\backslash B_{\frac{\sqrt{3}t}{2}}}|\nabla v|^2d x\\
&+\frac{C}{(t-s)^2}\int_{B_t \backslash B_s}(|u|^2+|\omega|^2+|b|^2) d x+\frac{C}{t-s} \int_{B_t\backslash B_s}(|u|^2+|\omega|^2+|b|^2) |u|dx\\
&+C\int_{B_t}(|u|^2+|b|^2)|\nabla v| dx.
\end{align*}
Hence, it follows that
\begin{align}\label{ine3.10}
f(s)\leq&\frac{1}{2}\int_{B_t\backslash B_{\frac{\sqrt{3}t}{2}}}|\nabla u|^2d x+\frac{1}{4}\int_{B_t\backslash B_{\frac{\sqrt{3}t}{2}}}|\omega|^2d x+\frac{C}{(t-s)^2}\int_{B_t \backslash B_s}(|u|^2+|\omega|^2+|b|^2) d x\notag\\
&+\frac{C}{t-s} \int_{B_t\backslash B_s}(|u|^2+|\omega|^2+|b|^2) |u|dx+C\int_{B_t\backslash B_{\frac{\sqrt{3}t}{2}}}(|u|^2+|b|^2)|\nabla v|dx.
\end{align}
Here we have used \eqref{ine2.2}. By the H\"{o}lder inequality and \eqref{ine2.2}, we have
\begin{align}\label{ine3.11}
&\frac{C}{t-s} \int_{B_t\backslash B_s}|u|^3dx+C\int_{B_t\backslash B_{\frac{\sqrt{3}t}{2}}}|u|^2|\nabla v| dx\notag\\
\leq&\frac{C}{t-s}\|u\|_{L^{3}(B_t\backslash B_s)}^3+\|u\|_{L^{3}(B_t\backslash B_{\frac{\sqrt{3}t}{2}})}^2\|\nabla v\|_{L^{3}(B_t\backslash B_{\frac{\sqrt{3}t}{2}})}\\
\leq&\frac{C}{t-s}\|u\|_{L^{3}(B_t\backslash B_{\frac{3R}{2}})}^3,\notag
\end{align}
and
\begin{align}\label{add-ub2}
&\frac{C}{t-s} \int_{B_t\backslash B_s}|b|^2|u|dx+C\int_{B_t\backslash B_{\frac{\sqrt{3}t}{2}}}|b|^2|\nabla v| dx\notag\\
\leq&\frac{C}{t-s}\|b\|_{L^{2p'}(B_t\backslash B_s)}^2\|u\|_{L^p(B_t\backslash B_s)}+\|b\|_{L^{2p'}(B_t\backslash B_{\frac{\sqrt{3}t}{2}})}^2\|\nabla v\|_{L^p(B_t\backslash B_{\frac{\sqrt{3}t}{2}})}\\
\leq&\frac{C}{t-s}\|b\|_{L^{2p'}(B_t\backslash B_{\frac{3R}{2}})}^2\|u\|_{L^p(B_t\backslash B_{\frac{3R}{2}})}.\notag
\end{align}
Combining \eqref{ine3.10}, \eqref{ine3.11} and \eqref{add-ub2}, and recalling the definitions of the terms $J_1$, $J_2$, $J_3$, $J_4$, $J_5$ in Section \ref{sec2}, we can verify that
\eqref{ine3.12} holds.
\end{proof}

Now we are in a position to prove Theorem \ref{main1}.

\begin{proof}[{\bf Proof of Theorem \ref{main1}}]
We divide the assumptions into two main cases, i.e., $\frac{3}{2} <p<3$ and $3 \leq p\leq\frac{9}{2}$.
Firstly, we consider the case $\frac{3}{2} <p<3$. Since
 $$\liminf\limits_{R\rightarrow+\infty}\left[X_{p,\alpha}(R)+Y_{q,\beta}(R)+Z_{r,\gamma}(R)\right]<+\infty,$$
 there exists a sequence $R_j\nearrow+\infty$ such that
\begin{align}\label{lim}
\lim\limits_{j\rightarrow+\infty}X_{p,\alpha}(R_j)<+\infty,\;\lim\limits_{j\rightarrow+\infty}Y_{q,\beta}(R_j)<+\infty,\;\lim\limits_{j\rightarrow+\infty}Z_{r,\gamma}(R_j)<+\infty.
\end{align}

\textbf{Assume that (A1) holds.} We first consider the case $1\leq q<+\infty$. Combining \eqref{ine3.12}, \eqref{ine2.3}, \eqref{later2.5}, \eqref{ine2.7}, \eqref{later2.13} and \eqref{ine2.13}, we derive that
\begin{align*}
f(s)\leq&\frac{1}{2}f(t)+\frac{CR^{\frac{6}{p}-1}}{(t-s)^{\frac{12}{p}-2}}\|u\|_{L^p(A_R)}^{2}+\frac{C}{(t-s)^2}R^{3-\frac{6}{p}}\|u\|_{L^{p}(A_R)}^{2}+\frac{CR^{\frac{6}{r}-1}}{(t-s)^{\frac{12}{r}-2}}\|b\|_{L^{r}(A_R)}^{2}\\
&+\frac{C}{(t-s)^2}R^{3-\frac{6}{r}}\|b\|_{L^r(A_R)}^{2}+\left(\frac{Ch_1(q)}{(t-s)^\frac{6-q}{q(1-\theta)}}+\frac{CR^{3-\frac{6}{q}}}{(t-s)^\frac{2}{1-\theta}}\right)\|\omega\|_{L^q(A_R)}^2\\
&+\frac{C}{(t-s)^\frac{6-p}{2p-3}}\|u\|_{L^p(A_R)}^\frac{3p}{2p-3}+Ch_2(q)\left(\frac{1}{t-s}\|\omega\|_{L^q(A_R)}^{\frac{(6-2p')q}{(6-q)p'}(1-\theta)}\|u\|_{L^p(A_R)}\right)^{\frac{(6-q)p'}{(3-p')q(1-\theta)}}\\
&+Ch_3(q)\left(\frac{1}{t-s}\|\omega\|_{L^q(A_R)}^{\frac{2q}{(q-2)p}(1-\theta)}\|u\|_{L^p(A_R)}\right)^\frac{(q-2)p}{(1-\theta)q}+\frac{C}{(t-s)^\frac{(6-r)p'}{(3-p')r}}\|u\|_{L^p(A_R)}^\frac{(6-r)p'}{(3-p')r}\|b\|_{L^r(A_R)}^2,
\end{align*}
where $\theta\in(0,1)$ is to be determined.

Applying Lemma \ref{Lem2.3} to the above function inequality, and taking $s=\sqrt{3}R$ and $t=2R$, we conclude that
\begin{align*}
f(R)\leq& f\left(\sqrt{3}R\right)\leq CR^{1-\frac{6}{p}}\|u\|_{L^p(A_R)}^2+CR^{1-\frac{6}{r}}\|b\|_{L^{r}(A_R)}^{2}\\
&+C\left(h_1(q)R^{-\frac{6-q}{q(1-\theta)}}+R^{3-\frac{6}{q}-\frac{2}{1-\theta}}\right)\|\omega\|_{L^q(A_R)}^2+CR^\frac{p-6}{2p-3}\|u\|_{L^p(A_R)}^\frac{3p}{2p-3}\\
&+Ch_2(q)\left(\frac{1}{R}\|\omega\|_{L^q(A_R)}^{\frac{(6-2p')q}{(6-q)p'}(1-\theta)}\|u\|_{L^p(A_R)}\right)^{\frac{(6-q)p'}{(3-p')q(1-\theta)}}\\
&+Ch_3(q)\left(\frac{1}{R}\|\omega\|_{L^q(A_R)}^{\frac{2q}{(q-2)p}(1-\theta)}\|u\|_{L^p(A_R)}\right)^\frac{(q-2)p}{(1-\theta)q}+CR^{-\frac{(6-r)p'}{(3-p')r}}\|u\|_{L^p(A_R)}^\frac{(6-r)p'}{(3-p')r}\|b\|_{L^r(A_R)}^2.
\end{align*}
Hence, it holds that
\begin{align}\label{add1}
f(R)\leq&CR^{1-\frac{6}{p}+2\alpha}[X_{p,\alpha}(R)]^2+CR^{1-\frac{6}{r}+2\gamma}[Z_{r,\gamma}(R)]^2+CR^{\frac{p-6}{2p-3}+\frac{3p}{2p-3}\alpha}[X_{p,\alpha}(R)]^\frac{3p}{2p-3}\notag\\
&+C\left(h_1(q)R^{-\frac{6-q}{q(1-\theta)}+2\beta}+R^{3-\frac{6}{q}-\frac{2}{1-\theta}+2\beta}\right)[Y_{q,\beta}(R)]^2\notag\\
&+Ch_2(q)\left(R^{\alpha+\frac{(6-2p')q}{(6-q)p'}(1-\theta)\beta-1}X_{p,\alpha}(R)[Y_{q,\beta}(R)]^{\frac{(6-2p')q}{(6-q)p'}(1-\theta)}\right)^{\frac{(6-q)p'}{(3-p')q(1-\theta)}}\\
&+Ch_3(q)\left(R^{\alpha+\frac{2q}{(q-2)p}(1-\theta)\beta-1}X_{p,\alpha}(R)[Y_{q,\beta}(R)]^{\frac{2q}{(q-2)p}(1-\theta)}\right)^\frac{(q-2)p}{(1-\theta)q}\notag\\
&+CR^{-\frac{(6-r)p'}{(3-p')r}+\frac{(6-r)p'}{(3-p')r}\alpha+2\gamma}X_{p,\alpha}(R)^\frac{(6-r)p'}{(3-p')r}[Z_{r,\gamma}(R)]^2.\notag
\end{align}
It is easy to verify that for any fixed $\beta$, we can choose a positive constant $\theta$ sufficiently close to $1$ such that
$$-\frac{6-q}{q(1-\theta)}+2\beta<0\;(\text{when }q<6),\;3-\frac{6}{q}-\frac{2}{1-\theta}+2\beta<0,$$
$$\alpha+\frac{(6-2p')q}{(6-q)p'}(1-\theta)\beta-1<0\;(\text{when }q<6),\;\alpha+\frac{2q}{(q-2)p}(1-\theta)\beta-1<0\;(\text{when }q>2).$$
Letting $R=R_j\rightarrow+\infty$, thanks to \eqref{lim} and
$$1-\frac{6}{p}+2\alpha<0,\;1-\frac{6}{r}+2\gamma\leq0,\;\frac{p-6}{2p-3}+\frac{3p}{2p-3}\alpha\leq0,$$
$$-\frac{(6-r)p'}{(3-p')r}+\frac{(6-r)p'}{(3-p')r}\alpha+2\gamma=\frac{(6-r)p}{(2p-3)r}\left(-1+\alpha+\frac{(4p-6)r}{(6-r)p}\gamma\right)\leq0,$$
we get $u,\omega,b\in L^6(\mathbb{R}^3)$ and $\omega,\nabla u,\nabla \omega,\nabla b\in L^2(\mathbb{R}^3)$. Furthermore, it holds that
\begin{align}\label{ine3.13}
\aligned
&\lim_{R\rightarrow+\infty}\left(\|u\|_{L^6(A_R)}+\|\omega\|_{L^6(A_R)}+\|b\|_{L^6(A_R)}+\|\omega\|_{L^2(A_R)}\right)=0,\\
&\lim_{R\rightarrow+\infty} \left(\|\nabla u\|_{L^2(A_R)}+\|\nabla \omega\|_{L^2(A_R)}+\|\nabla b\|_{L^2(A_R)}\right)=0.
\endaligned
\end{align}
Combining \eqref{ine3.12}, \eqref{ine2.4}, \eqref{later2.7}, \eqref{ine2.6}, \eqref{ine-a3} and \eqref{ine2.14}, we have
\begin{align*}
f(s)\leq&\frac{1}{2}\|\nabla u\|_{L^2(A_R)}^2+\frac{1}{4}\|\omega\|_{L^2(A_R)}^2+\frac{CR^2}{(t-s)^2}\left(\|u\|_{L^6(A_R)}^2+\|b\|_{L^6(A_R)}^2\right)\\
&+\frac{C}{(t-s)^2}\|\omega\|_{L^2(A_R)}^2+\frac{C}{t-s}\|u\|_{L^p(A_R)}^{\frac{3p}{6-p}}\|u\|_{L^6(A_R)}^{\frac{18-6p}{6-p}}\\
&+\frac{C}{t-s}\|u\|_{L^p(A_R)}\|\omega\|_{L^2(A_R)}^{2-\frac{3}{p}}\|\omega\|_{L^6(A_R)}^\frac{3}{p}+\frac{C}{t-s}\|u\|_{L^p(A_R)}\|b\|_{L^r(A_R)}^\frac{2(3-p')r}{(6-r)p'}\|b\|_{L^6(A_R)}^\frac{12p'-6r}{(6-r)p'}.
\end{align*}
Taking $s=\sqrt{3}R$ and $t=2R$ in the above inequality, we get
\begin{align*}
f(R)\leq &f\left(\sqrt{3}R\right)\leq \frac{1}{2}\|\nabla u\|_{L^2(A_R)}^2+\frac{1}{4}\|\omega\|_{L^2(A_R)}^2+C\left(\|u\|_{L^6(A_R)}^2+\|b\|_{L^6(A_R)}^2\right)\\
&+CR^{-2}\|\omega\|_{L^2(A_R)}^2+CR^{\frac{3p}{6-p}\alpha-1}[X_{p,\alpha}(R)]^{\frac{3p}{6-p}}\|u\|_{L^6(A_R)}^{\frac{18-6p}{6-p}}\\
&+CR^{\alpha-1}X_{p,\alpha}(R)\|\omega\|_{L^2(A_R)}^{2-\frac{3}{p}}\|\omega\|_{L^6(A_R)}^\frac{3}{p}+CR^{-1+\alpha+\frac{2(3-p')r}{(6-r)p'}\gamma}X_{p,\alpha}(R)\|b\|_{L^6(A_R)}^\frac{12p'-6r}{(6-r)p'}.
\end{align*}
Letting $R=R_j\rightarrow+\infty$ and thanks to \eqref{ine3.13}, we obtain that $u=\omega=b=0$.

\par Now we consider the case $q=+\infty$. Routinely,  combining \eqref{ine3.12}, \eqref{ine2.3}, \eqref{later2.6}, \eqref{ine2.7}, \eqref{later2.14} and \eqref{ine2.13}, and applying Lemma \ref{Lem2.3}, we deduce that
\begin{align}\label{add-f1}
f(R)\leq&CR^{1-\frac{6}{p}+2\alpha}[X_{p,\alpha}(R)]^2+CR^{1-\frac{6}{r}+2\gamma}[Z_{r,\gamma}(R)]^2\notag\\
&+CR^{3-\frac{2}{1-\theta}+2\beta}[Y_{q,\beta}(R)]^2+CR^{\frac{p-6}{2p-3}+\frac{3p}{2p-3}\alpha}[X_{p,\alpha}(R)]^\frac{3p}{2p-3}\notag\\
&+C\left(R^{\frac{3}{2p}(1-\theta)+\alpha+\frac{3}{p}(1-\theta)\beta-1}X_{p,\alpha}(R)[Y_{q,\beta}(R)]^{\frac{3}{p}(1-\theta)}\right)^\frac{2p}{3(1-\theta)}\\
&+CR^{-\frac{(6-r)p'}{(3-p')r}+\frac{(6-r)p'}{(3-p')r}\alpha+2\gamma}[X_{p,\alpha}(R)]^\frac{(6-r)p'}{(3-p')r}[Z_{r,\gamma}(R)]^2.\notag
\end{align}
Hence, we get $u,\omega,b\in L^6(\mathbb{R}^3)$ and $\omega,\nabla u,\nabla \omega,\nabla b\in L^2(\mathbb{R}^3)$. The rest of the proof is the same with the case $1\leq q<+\infty$.

\textbf{Assume that (A2) holds.}  We first consider the case $1\leq q<+\infty$.
Routinely, combining \eqref{ine3.12}, \eqref{ine2.3}, \eqref{later2.5}, \eqref{ine2.7}, \eqref{later2.13} and \eqref{ine2.15}, and applying Lemma \ref{Lem2.3}, we derive that
\begin{align}\label{add2}
f(R)\leq&CR^{1-\frac{6}{p}+2\alpha}[X_{p,\alpha}(R)]^2+CR^{1-\frac{6}{r}+2\gamma}[Z_{r,\gamma}(R)]^2+CR^{\frac{p-6}{2p-3}+\frac{3p}{2p-3}\alpha}[X_{p,\alpha}(R)]^\frac{3p}{2p-3}\notag\\
&+C\left(h_1(q)R^{-\frac{6-q}{q(1-\theta)}+2\beta}+R^{3-\frac{6}{q}-\frac{2}{1-\theta}+2\beta}\right)[Y_{q,\beta}(R)]^2\notag\\
&+Ch_2(q)\left(R^{\alpha+\frac{(6-2p')q}{(6-q)p'}(1-\theta)\beta-1}X_{p,\alpha}(R)[Y_{q,\beta}(R)]^{\frac{(6-2p')q}{(6-q)p'}(1-\theta)}\right)^{\frac{(6-q)p'}{(3-p')q(1-\theta)}}\\
&+Ch_3(q)\left(R^{\alpha+\frac{2q}{(q-2)p}(1-\theta)\beta-1}X_{p,\alpha}(R)[Y_{q,\beta}(R)]^{\frac{2q}{(q-2)p}(1-\theta)}\right)^\frac{(q-2)p}{(1-\theta)q}\notag\\
&+CR^{2-\frac{3}{p}-\frac{6}{r}
+\alpha+2\gamma}X_{p,\alpha}(R)[Z_{r,\gamma}(R)]^2.\notag
\end{align}
Hence, we get $u,\omega,b\in L^6(\mathbb{R}^3)$ and $\omega,\nabla u,\nabla \omega,\nabla b\in L^2(\mathbb{R}^3)$.
Routinely, combining \eqref{ine3.12}, \eqref{ine2.4}, \eqref{later2.7}, \eqref{ine2.6}, \eqref{ine-a3} and \eqref{ine2.15}, we have
\begin{align*}
f(R)&\leq \frac{1}{2}\|\nabla u\|_{L^2(A_R)}^2+\frac{1}{4}\|\omega\|_{L^2(A_R)}^2+CR^{\frac{3p}{6-p}\alpha-1}[X_{p,\alpha}(R)]^\frac{3p}{6-p}\|u\|_{L^6(A_R)}^{\frac{18-6p}{6-p}}\\
&+C\left(\|u\|_{L^6(A_R)}^2+\|b\|_{L^6(A_R)}^2+R^{-2}\|\omega\|_{L^2(A_R)}^2\right)\\
&+CR^{\alpha-1}X_{p,\alpha}(R)\|\omega\|_{L^2(A_R)}^{2-\frac{3}{p}}\|\omega\|_{L^6(A_R)}^\frac{3}{p}+CR^{2-\frac{3}{p}-\frac{6}{r}+\alpha+2\gamma}X_{p,\alpha}(R)[Z_{r,\gamma}(R)]^2.
\end{align*}
Consequently, $u=\omega=b=0$. Now we consider the case $q=+\infty$. Routinely,  combining \eqref{ine3.12}, \eqref{ine2.3}, \eqref{later2.6}, \eqref{ine2.7}, \eqref{later2.14} and \eqref{ine2.15}, and applying Lemma \ref{Lem2.3}, we deduce that
\begin{align}\label{add-f2}
f(R)\leq&CR^{1-\frac{6}{p}+2\alpha}[X_{p,\alpha}(R)]^2+CR^{1-\frac{6}{r}+2\gamma}[Z_{r,\gamma}(R)]^2\notag\\
&+CR^{3-\frac{2}{1-\theta}+2\beta}[Y_{q,\beta}(R)]^2+CR^{\frac{p-6}{2p-3}+\frac{3p}{2p-3}\alpha}[X_{p,\alpha}(R)]^\frac{3p}{2p-3}\notag\\
&+C\left(R^{\frac{3}{2p}(1-\theta)+\alpha+\frac{3}{p}(1-\theta)\beta-1}X_{p,\alpha}(R)[Y_{q,\beta}(R)]^{\frac{3}{p}(1-\theta)}\right)^\frac{2p}{3(1-\theta)}\\
&+CR^{2-\frac{3}{p}-\frac{6}{r}+\alpha+2\gamma}X_{p,\alpha}(R)[Z_{r,\gamma}(R)]^2.\notag
\end{align}
Hence, we get $u,\omega,b\in L^6(\mathbb{R}^3)$ and $\omega,\nabla u,\nabla \omega,\nabla b\in L^2(\mathbb{R}^3)$. The rest of the proof is the same with the case $1\leq q<+\infty$.
\par Next, we consider the case $3 \leq p\leq\frac{9}{2}$.
Since
 $$
 \liminf\limits_{R\rightarrow+\infty}X_{p,\alpha}(R)=0,\;\limsup\limits_{R\rightarrow+\infty}[Y_{q,\beta}(R)+Z_{r,\gamma}(R)]<+\infty,
 $$
there exists a sequence $R_j\nearrow+\infty$ such that
\begin{align}\label{ine3.14}
 \lim\limits_{j\rightarrow+\infty}X_{p,\alpha}(R_j)=0,\;\lim\limits_{j\rightarrow+\infty}Y_{q,\beta}(R_j)<+\infty,\;\lim\limits_{j\rightarrow+\infty}Z_{r,\gamma}(R_j)<+\infty
\end{align}
\par \textbf{Assume that (A3) holds.} We first consider the case $1\leq q<+\infty$. Combining \eqref{ine3.12}, \eqref{ine2.3}, \eqref{later2.5}, \eqref{ine2.5}, \eqref{later2.13} and \eqref{ine2.13}, and applying Lemma \ref{Lem2.3}, we deduce that
\begin{align}\label{add3}
f(R)\leq&CR^{1-\frac{6}{p}+2\alpha}[X_{p,\alpha}(R)]^2+CR^{1-\frac{6}{r}+2\gamma}[Z_{r,\gamma}(R)]^2+CR^{2-\frac{9}{p}+3\alpha}[X_{p,\alpha}(R)]^3\notag\\
&+C\left(h_1(q)R^{-\frac{6-q}{q(1-\theta)}+2\beta}+R^{3-\frac{6}{q}-\frac{2}{1-\theta}+2\beta}\right)[Y_{q,\beta}(R)]^2\notag\\
&+Ch_2(q)\left(R^{\alpha+\frac{(6-2p')q}{(6-q)p'}(1-\theta)\beta-1}X_{p,\alpha}(R)[Y_{q,\beta}(R)]^{\frac{(6-2p')q}{(6-q)p'}(1-\theta)}\right)^{\frac{(6-q)p'}{(3-p')q(1-\theta)}}\\
&+Ch_3(q)\left(R^{\alpha+\frac{2q}{(q-2)p}(1-\theta)\beta-1}X_{p,\alpha}(R)[Y_{q,\beta}(R)]^{\frac{2q}{(q-2)p}(1-\theta)}\right)^\frac{(q-2)p}{(1-\theta)q}\notag\\
&+CR^{-\frac{(6-r)p'}{(3-p')r}+\frac{(6-r)p'}{(3-p')r}\alpha+2\gamma}X_{p,\alpha}(R)^\frac{(6-r)p'}{(3-p')r}[Z_{r,\gamma}(R)]^2\notag.
\end{align}
Letting $R=R_j\rightarrow+\infty$ and using \eqref{ine3.14},
we get $u,\omega,b\in L^6(\mathbb{R}^3)$ and $\omega,\nabla u,\nabla \omega,\nabla b\in L^2(\mathbb{R}^3)$.
Combining \eqref{ine3.12}, \eqref{ine2.4}, \eqref{later2.7}, \eqref{ine2.5}, \eqref{ine-a3} and \eqref{ine2.13}, we have
\begin{align*}
f(R)\leq &\frac{1}{2}\|\nabla u\|_{L^2(A_R)}^2+\frac{1}{4}\|\omega\|_{L^2(A_R)}^2+C\left(\|u\|_{L^6(A_R)}^2+\|b\|_{L^6(A_R)}^2+R^{-2}\|\omega\|_{L^2(A_R)}^2\right)\\
&+CR^{2-\frac{9}{p}+3\alpha}[X_{p,\alpha}(R)]^3+CR^{\alpha-1}X_{p,\alpha}(R)\|\omega\|_{L^2(A_R)}^{2-\frac{3}{p}}\|\omega\|_{L^6(A_R)}^\frac{3}{p}\\
&+CR^{-\frac{(6-r)p'}{(3-p')r}+\frac{(6-r)p'}{(3-p')r}\alpha+2\gamma}[X_{p,\alpha}(R)]^\frac{(6-r)p'}{(3-p')r}[Z_{r,\gamma}(R)]^2.
\end{align*}
Letting $R=R_j\rightarrow+\infty$ and considering \eqref{ine3.13}, we obtain that $u=\omega=b=0$.

Now we consider the case $q=+\infty$. Routinely,  combining \eqref{ine3.12}, \eqref{ine2.3}, \eqref{later2.6}, \eqref{ine2.5}, \eqref{later2.14} and \eqref{ine2.13}, and applying Lemma \ref{Lem2.3}, we deduce that
\begin{align}\label{add-f3}
f(R)\leq& CR^{1-\frac{6}{p}+2\alpha}[X_{p,\alpha}(R)]^2+CR^{1-\frac{6}{r}+2\gamma}[Z_{r,\gamma}(R)]^2\notag\\
&+CR^{3-\frac{2}{1-\theta}+2\beta}[Y_{q,\beta}(R)]^2+CR^{2-\frac{9}{p}+3\alpha}[X_{p,\alpha}(R)]^3\notag\\
&+C\left(R^{\frac{3}{2p}(1-\theta)+\alpha+\frac{3}{p}(1-\theta)\beta-1}X_{p,\alpha}(R)[Y_{q,\beta}(R)]^{\frac{3}{p}(1-\theta)}\right)^\frac{2p}{3(1-\theta)}\\
&+CR^{-\frac{(6-r)p'}{(3-p')r}+\frac{(6-r)p'}{(3-p')r}\alpha+2\gamma}[X_{p,\alpha}(R)]^\frac{(6-r)p'}{(3-p')r}[Z_{r,\gamma}(R)]^2.\notag
\end{align}
Hence, we get $u,\omega,b\in L^6(\mathbb{R}^3)$ and $\omega,\nabla u,\nabla \omega,\nabla b\in L^2(\mathbb{R}^3)$. The rest of the proof is the same with the case $1\leq q<+\infty$.

\textbf{Assume that (A4) holds.} We first consider the case $1\leq q<+\infty$. Routinely, combining \eqref{ine3.12}, \eqref{ine2.3}, \eqref{later2.5}, \eqref{ine2.5}, \eqref{later2.13} and \eqref{ine2.15}, and applying Lemma \ref{Lem2.3}, we deduce that
\begin{align*}
f(R)&\leq CR^{1-\frac{6}{p}+2\alpha}[X_{p,\alpha}(R)]^2+CR^{1-\frac{6}{r}+2\gamma}[Z_{r,\gamma}(R)]^2+CR^{2-\frac{9}{p}+3\alpha}[X_{p,\alpha}(R)]^3\\
&+C\left(h_1(q)R^{-\frac{6-q}{q(1-\theta)}+2\beta}+R^{3-\frac{6}{q}-\frac{2}{1-\theta}+2\beta}\right)[Y_{q,\beta}(R)]^2\\
&+Ch_2(q)\left(R^{\alpha+\frac{(6-2p')q}{(6-q)p'}(1-\theta)\beta-1}X_{p,\alpha}(R)[Y_{q,\beta}(R)]^{\frac{(6-2p')q}{(6-q)p'}(1-\theta)}\right)^{\frac{(6-q)p'}{(3-p')q(1-\theta)}}\\
&+Ch_3(q)\left(R^{\alpha+\frac{2q}{(q-2)p}(1-\theta)\beta-1}X_{p,\alpha}(R)[Y_{q,\beta}(R)]^{\frac{2q}{(q-2)p}(1-\theta)}\right)^\frac{(q-2)p}{(1-\theta)q}\\
&+CR^{2-\frac{3}{p}-\frac{6}{r}+\alpha+2\gamma}X_{p,\alpha}(R)[Z_{r,\gamma}(R)]^2.
\end{align*}
Hence, we get $u,\omega,b\in L^6(\mathbb{R}^3)$ and $\omega,\nabla u,\nabla \omega,\nabla b\in L^2(\mathbb{R}^3)$.
Combining \eqref{ine3.12}, \eqref{ine2.4}, \eqref{later2.7}, \eqref{ine2.5}, \eqref{ine-a3} and \eqref{ine2.15}, we have
\begin{align*}
f(R)\leq &\frac{1}{2}\|\nabla u\|_{L^2(A_R)}^2+\frac{1}{4}\|\omega\|_{L^2(A_R)}^2+C\left(\|u\|_{L^6(A_R)}^2+\|b\|_{L^6(A_R)}^2+R^{-2}\|\omega\|_{L^2(A_R)}^2\right)\\
&+CR^{2-\frac{9}{p}+3\alpha}[X_{p,\alpha}(R)]^3+CR^{\alpha-1}X_{p,\alpha}(R)\|\omega\|_{L^2(A_R)}^{2-\frac{3}{p}}\|\omega\|_{L^6(A_R)}^\frac{3}{p}\\
&+CR^{2-\frac{3}{p}-\frac{6}{r}+\alpha+2\gamma}X_{p,\alpha}(R)[Z_{r,\gamma}(R)]^2.
\end{align*}
Consequently, $u=\omega=b=0$.
 \par Now we consider the case $q=+\infty$. Routinely,  combining \eqref{ine3.12}, \eqref{ine2.3}, \eqref{later2.6}, \eqref{ine2.5}, \eqref{later2.14} and \eqref{ine2.15}, and applying Lemma \ref{Lem2.3}, we deduce that
\begin{align*}
f(R)\leq& CR^{1-\frac{6}{p}+2\alpha}[X_{p,\alpha}(R)]^2+CR^{1-\frac{6}{r}+2\gamma}[Z_{r,\gamma}(R)]^2+CR^{3-\frac{2}{1-\theta}+2\beta}[Y_{q,\beta}(R)]^2\\
&+CR^{2-\frac{9}{p}+3\alpha}[X_{p,\alpha}(R)]^3+C\left(R^{\frac{3}{2p}(1-\theta)+\alpha+\frac{3}{p}(1-\theta)\beta-1}X_{p,\alpha}(R)[Y_{q,\beta}(R)]^{\frac{3}{p}(1-\theta)}\right)^\frac{2p}{3(1-\theta)}\\
&+CR^{2-\frac{3}{p}-\frac{6}{r}+\alpha+2\gamma}X_{p,\alpha}(R)[Z_{r,\gamma}(R)]^2.
\end{align*}
Hence, we get $u,\omega,b\in L^6(\mathbb{R}^3)$ and $\omega,\nabla u,\nabla \omega,\nabla b\in L^2(\mathbb{R}^3)$. The rest of the proof is the same with the case $1\leq q<+\infty$.

\end{proof}

\begin{Rem}\label{Rem3.1}
For the proof of Theorem \ref{main1}, we can easily check that
the inequality $\alpha+2\gamma<\frac{3}{p}+\frac{6}{r}-2$ in $\mathrm{(A2)}$ can be replaced by the equality $\alpha+2\gamma=\frac{3}{p}+\frac{6}{r}-2,$
but the price is that we need to assume in addition that
$$\liminf\limits_{R\rightarrow+\infty}X_{p,\alpha}(R)=0,\;\limsup\limits_{R\rightarrow+\infty}(Y_{q,\beta}(R)+Z_{r,\gamma}(R))<+\infty,\text{ or }$$
$$\limsup\limits_{R\rightarrow+\infty}(X_{p,\alpha}(R)+Y_{q,\beta}(R))<+\infty,\;\liminf\limits_{R\rightarrow+\infty}Z_{r,\gamma}(R)=0.$$
\end{Rem}

\begin{proof}[{\bf Proof of Corollary \ref{Cor1.3}}]
Since $u\in L^p(\mathbb{R}^3)$, $\omega\in L^q(\mathbb{R}^3)$, $b\in L^r(\mathbb{R}^3)$, we have
$$\lim_{R\rightarrow+\infty}\|u\|_{L^p\left(A_R\right)}=\lim_{R\rightarrow+\infty}\|\omega\|_{L^q\left(A_R\right)}=\lim_{R\rightarrow+\infty}\|b\|_{L^r\left(A_R\right)}=0.$$
We divide the range of $p,q,r$ into four cases according to Theorem \ref{main1}.
Applying Theorem \ref{main1} with $\alpha=\beta=\gamma=0$ and observing the endpoint case in Remark \ref{Rem3.1}, we obtain the conclusion.
\end{proof}

The proof of Theorem \ref{main3} is similar to that of Theorem \ref{main1}.
Here we provide another method to establish the energy estimate, which is slightly different. Multiplying both sides of $\eqref{equ1.3}_{1}$ by $u \eta^2-v$, then integrating over $B_t$ and using integration by parts, we obtain
\begin{align}\label{ine5.1}
\aligned
&\int_{B_t}|\nabla u|^2\eta^2dx=\int_{B_t}\nabla u:\nabla vdx+\int_{B_t}\frac{|u|^2}{2}\Delta \eta^2 dx+\int_{B_t}\frac{|u|^2}{2}(u\cdot\nabla\eta^2)dx-\\
&\int_{B_t}(u\cdot \nabla)v\cdot udx+\frac{1}{2} \int_{B_t}\omega\cdot(\eta^2\curl u+\nabla\eta^2\times u)dx-\frac{1}{2} \int_{B_t}\omega\cdot \curl vdx.
\endaligned
\end{align}
Multiplying both sides of $\eqref{equ1.3}_{2}$ by $\omega \eta^2$, then integrating over $B_t$ and using integration by parts, we obtain
\begin{align}\label{ine5.2}
\aligned
&\int_{B_t}|\nabla\omega|^2\eta^2+|\mathrm{div}\omega|^2\eta^2+\kappa|\omega|^2\eta^2dx\\
=&\int_{B_t}\frac{|\omega|^2}{2}\Delta \eta^2+\frac{|\omega|^2}{2}(u\cdot\nabla\eta^2)dx-\int_{B_t}\mathrm{div}\omega(\omega\cdot\nabla\eta^2)dx+\frac{1}{2}\int_{B_t}\curl  u\cdot \omega\eta^2dx.
\endaligned
\end{align}
Collecting \eqref{ine5.1} and \eqref{ine5.2}, we find that
\begin{align}\label{ine5.3}
&\int_{B_t}\left(|\nabla u|^2+|\nabla \omega|^{2}+|\mathrm{div}\omega|^2+\kappa|\omega|^2\right)\eta^2dx\notag\\
=&\int_{B_t}\nabla u:\nabla vdx+\int_{B_t}\curl  u\cdot \omega\eta^2dx-\int_{B_t}\mathrm{div}\omega(\omega\cdot\nabla\eta^2)dx\notag\\
&+\frac{1}{2} \int_{B_t}\omega\cdot(\nabla\eta^2\times u)dx-\frac{1}{2} \int_{B_t}\omega\cdot \curl  vdx+\frac{1}{2}\int_{B_t}(|u|^2+|\omega|^2) \Delta \eta^2dx\\
&+\frac{1}{2} \int_{B_t}(|u|^2+|\omega|^2) (u \cdot \nabla \eta^2) dx-\int_{B_t}(u\otimes u): \nabla v  dx.\notag
\end{align}
From the identity
$$\int_{B_t}|\nabla(u\eta)|^2 d x=\int_{B_t}|\divg (u\eta)|^2 d x+\int_{B_t}|\text{\rm curl}(u\eta)|^2 d x,$$
we see
$$\|\text{\rm curl}(u\eta)\|_{L^2(B_t)}\leq \|\nabla(u\eta)\|_{L^2(B_t)},$$
which implies that
\begin{align}\label{ine5.4}
\|\eta\text{\rm curl}u\|_{L^2(B_t)}&=\|\text{\rm curl}(u\eta)-\nabla\eta\times u\|_{L^2(B_t)}\notag\\
&\leq\|\text{\rm curl}(u\eta)\|_{L^2(B_t)}+\|\nabla\eta\times u\|_{L^2(B_t)}\notag\\
&\leq \|\nabla(u\eta)\|_{L^2(B_t)}+\|\nabla\eta\times u\|_{L^2(B_t)}\\
&\leq \|\eta\nabla u\|_{L^2(B_t)}+C\|u\otimes\nabla\eta\|_{L^2(B_t)}.\notag
\end{align}
Let $\varepsilon$ and $\delta$ be two positive constants to be determined. By the H\"{o}lder inequality, \eqref{ine5.4} and the Young inequality, we have
\begin{align}\label{ine5.5}
&\int_{B_t}\curl  u\cdot \omega\eta^2dx-\int_{B_t}\mathrm{div}\omega(\omega\cdot\nabla\eta^2)dx+\frac{1}{2}\int_{B_t}\omega\cdot(\nabla\eta^2\times u)dx\notag\\
\leq&\|\eta\text{\rm curl}u\|_{L^2(B_t)}\|\omega\eta\|_{L^2(B_t)}-\int_{B_t}\mathrm{div}\omega(\omega\cdot\nabla\eta^2)dx+\|\omega\eta\|_{L^2(B_t)}\|\nabla\eta\times u\|_{L^2(B_t)}\notag\\
\leq&\|\eta\nabla u\|_{L^2(B_t)}\|\omega\eta\|_{L^2(B_t)}-\int_{B_t}\mathrm{div}\omega(\omega\cdot\nabla\eta^2)dx+C\|\omega\eta\|_{L^2(B_t)}\|u\otimes\nabla\eta\|_{L^2(B_t)}\\
\leq&\varepsilon\int_{B_t}|\nabla u|^2\eta^2dx+\frac{1}{4\varepsilon}\int_{B_t}|\omega|^2\eta^2dx+\frac{1}{2}\int_{B_t}|\mathrm{div}\omega|^2\eta^2dx+2\int_{B_t}|\omega\cdot\nabla\eta|^2dx\notag\\
&+\delta\int_{B_t}|\omega|^2\eta^2dx+C_\delta\int_{B_t}|u\otimes\nabla\eta|^2dx.\notag
\end{align}
Here we mention that if we use the point-wise estimate $|\curl  u|\leq \sqrt{2}|\nabla u|$ instead of \eqref{ine5.4}, then we have to require $\kappa$ to be larger.

Combining \eqref{ine5.3} and \eqref{ine5.5}, we can derive that
\begin{align*}
&\int_{B_t}\left[(1-\varepsilon)|\nabla u|^2+|\nabla \omega|^{2}+\frac{1}{2}|\mathrm{div}\omega|^2+\left(\kappa-\frac{1}{4\varepsilon}-\delta\right)|\omega|^2\right]\eta^2dx\\
\leq&\int_{B_t\backslash B_{\frac{\sqrt{3}t}{2}}}\nabla u:\nabla vdx-\frac{1}{2} \int_{B_t\backslash B_{\frac{\sqrt{3}t}{2}}}\omega\cdot \curl  vdx+\frac{C}{(t-s)^2}\int_{B_t \backslash B_s}(|u|^2+|\omega|^2)dx\\
&+\frac{C}{t-s} \int_{B_t\backslash B_s}(|u|^2+|\omega|^2) |u|dx+\int_{B_t\backslash B_{\frac{\sqrt{3}t}{2}}}|u|^2|\nabla v| dx.
\end{align*}
Since $\kappa>\frac{1}{4}$, we can choose some $\varepsilon$ and $\delta$ satisfying
$$\varepsilon\in\left(\frac{1}{4\kappa},1\right),\delta\in\left(0,\kappa-\frac{1}{4\varepsilon}\right).$$
Hence, it follows that
\begin{align*}
&\int_{B_t}\left(|\nabla u|^2+|\nabla \omega|^{2}+|\mathrm{div}\omega|^2+|\omega|^2\right)\eta^2dx\\
\leq&\frac{1}{2}\int_{B_t\backslash  B_{\frac{\sqrt{3}t}{2}}}|\nabla u|^2d x+\frac{1}{4}\int_{B_t\backslash  B_{\frac{\sqrt{3}t}{2}}}|\omega|^2d x+\frac{C}{(t-s)^2}\int_{B_t \backslash B_s}(|u|^2+|\omega|^2)dx\\
&+\frac{C}{t-s} \int_{B_t\backslash B_s}(|u|^2+|\omega|^2) |u|dx+C\int_{B_t\backslash B_{\frac{\sqrt{3}t}{2}}}|u|^2|\nabla v| dx.
\end{align*}
The rest of the proof of Theorem \ref{main3} follows the approach adopted in the proof of Theorem \ref{main1}, and thus is omitted.

\section{Proof of Theorem \ref{main2}}\label{sec4}
In this section, let $\eta$ be a cut-off function defined by
\begin{align*}
\eta(x)= \begin{cases}
1, & |x| <\frac{3R}{2}, \\
4-\frac{2}{R}|x|,& \frac{3R}{2}\leq |x|\leq 2R,\\
0, & |x| >2R.
\end{cases}
\end{align*}
For any $R>0$, we define the function $E(R)$ as follows:
\begin{align}\label{E4.1}
E(R)=\int_{\mathbb{R}^3}\left(|\nabla u|^{2}+|\nabla b|^{2}+|\nabla \omega|^{2}+|\omega|^{2}\right) \eta d x.
\end{align}
We will show some properties of $E(R)$ in the next two lemmas. In Lemma \ref{Lem4.1}, we establish a lower bound estimate for the derivative function $E'(R)$.
In Lemma \ref{Lem4.2}, we establish an upper bound estimate for $E(R)$.
\begin{Lem}\label{Lem4.1}
Let $(u,\pi,\omega,b)$ be a smooth solution of \eqref{equ1.1} and $E(R)$ be defined by \eqref{E4.1}. Then we have
\begin{align}\label{ine4.1}
E'(R)\geq\frac{3}{R}\int_{A_R}\left(|\nabla u|^{2}+|\nabla b|^{2}+|\nabla \omega|^{2}+|\omega|^{2}\right)dx.
\end{align}
\end{Lem}
\begin{proof}
We rewrite $E(R)$ as the following form
\begin{align*}
E(R)=&\int_{B_{\frac{3R}{2}}}\left(|\nabla u|^{2}+|\nabla b|^{2}+|\nabla \omega|^{2}+|\omega|^{2}\right) \eta d x\\
&+\int_{A_R}\left(|\nabla u|^{2}+|\nabla b|^{2}+|\nabla \omega|^{2}+|\omega|^{2}\right)\left(-\frac{2}{R}|x|+4\right)dx.
\end{align*}
By a direct calculation, we obtain
\begin{align*}
E'(R)=&\frac{3}{2}\int_{\partial B_{\frac{3R}{2}}}\left(|\nabla u|^{2}+|\nabla b|^{2}+|\nabla \omega|^{2}+|\omega|^{2}\right) dS\notag\\
&+\int_{A_R}\left(|\nabla u|^{2}+|\nabla b|^{2}+|\nabla \omega|^{2}+|\omega|^{2}\right)\frac{2}{R^2}|x|dx\notag\\
&+2\int_{\partial B_{2R}}\left(|\nabla u|^{2}+|\nabla b|^{2}+|\nabla \omega|^{2}+|\omega|^{2}\right)\left(-\frac{2}{R}\cdot2R+4\right)dS\notag\\
&-\frac{3}{2}\int_{\partial B_{\frac{3R}{2}}}\left(|\nabla u|^{2}+|\nabla b|^{2}+|\nabla \omega|^{2}+|\omega|^{2}\right)\left(-\frac{2}{R}\cdot\frac{3R}{2}+4\right)dS\\
=&\int_{A_R}\left(|\nabla u|^{2}+|\nabla b|^{2}+|\nabla \omega|^{2}+|\omega|^{2}\right)\frac{2}{R^2}|x|dx\notag\\
\geq&\frac{3}{R}\int_{A_R}\left(|\nabla u|^{2}+|\nabla b|^{2}+|\nabla \omega|^{2}+|\omega|^{2}\right)dx.\notag
\end{align*}
\end{proof}

\begin{Lem}\label{Lem4.2}
Let $(u,\pi,\omega,b)$ be a smooth solution of \eqref{equ1.1} and $E(R)$ be defined by \eqref{E4.1}, $\chi\in (0,2)$. Let $\overline{\varphi}_R$ represent the mean value of $\varphi$ on the annulus $A_R$. Denote $U=u-\overline{u}_R$, $B=b-\overline{b}_R$, respectively. Then for any $R\geq1$, it holds that
\begin{align}\label{ine4.18}
E(R)\leq& C\left(\|\nabla U\|_{L^2 (A_R)}^2+\|\nabla B\|_{L^2 (A_R)}^2+\|\nabla\omega\|_{L^2(A_R)}^2+\|\omega\|_{L^2(A_R)}^2\right)+ \notag\\
&CR^{-1}\|u\|_{L^p(A_R)}\|U\|_{L^{2p'}(A_R)}^2+CR^{-1}\|u\|_{L^p(A_R)}\|B\|_{L^{2p'}(A_R)}^2+\\
&CR^{-1}\|u\|_{L^p(A_R)}f(2R)+CR^{\frac{1}{2}-\frac{3}{2p}-\frac{3}{r}}\|u\|_{L^p(A_R)}\|B\|_{L^{2p'}(A_R)}\|b\|_{L^r(A_R)},\notag
\end{align}
where $f$ is defined by \eqref{add-f}.
\end{Lem}
\begin{proof}
Since
$$\int_{A_R}U\cdot\nabla\eta dx=\int_{B_{2R}}U\cdot\nabla\eta dx=\int_{B_{2R}}\divg (U\eta)dx=0,$$
by Lemma \ref{Lem2.2}, there exists $v\in W_{0}^{1,\sigma}(A_R)$ such that $v$ satisfies the following equation
\begin{align*}
\divg v=U\cdot\nabla\eta \text{ in }A_R,
\end{align*}
with the estimate
\begin{align}\label{ine4.2}
\aligned
\|\nabla v\|_{L^\sigma(A_R)}\leq C\|U\cdot\nabla\eta\|_{L^\sigma(A_R)}\leq C R^{-1}\|U\|_{L^\sigma(A_R)},
\endaligned
\end{align}
for any $1<\sigma<+\infty$. We extend $v$ by zero to $B_\frac{3R}{2}$, then $v\in W_{0}^{1,\sigma}(B_{2R}).$

Obviously, $(U,\pi,\omega,B)$ satisfies
\begin{align}\label{equ4.3}
	\left\{
	\begin{array}{ll}
		\text{\rm curl}^2 U+(u\cdot\nabla) U+\nabla \pi=\chi\curl \omega+(b\cdot\nabla) B, \\
		-\Delta \omega+(u\cdot\nabla)\omega-\nabla(\div\omega)+2\chi\omega=\chi\curl U,\\
   -\Delta B+(u\cdot\nabla)B-(b\cdot\nabla)U=0,\\
		\divg U=\divg B=0.
	\end{array}
	\right.
\end{align}
Denote the $i$-th component of $U$, $\omega$ and $B$ by $U_i$, $\omega_i$ and $B_i$, respectively. Multiply both sides of $\eqref{equ4.3}_{1}$, $\eqref{equ4.3}_{2}$ and $\eqref{equ4.3}_{3}$ by $U \eta-w$, $\omega \eta$ and $B \eta$ respectively, integrate over $B_{2R}$ and apply integration by parts. This procedure yields
\begin{align}\label{ine4.4}
&\int_{B_{2R}}\left(|\curl U|^{2}+|\nabla \omega|^{2}+|\div \omega|^{2}+2\chi|\omega|^{2}+|\nabla B|^{2}\right)\eta d x\notag\\
=&-\int_{B_{2R}}\left[\curl U\cdot(\nabla\eta\times U)+(\omega\cdot\nabla\eta)\div\omega+\sum_{i=1}^3 \big[\nabla \omega_i\cdot (\omega_i\nabla\eta)+\nabla B_i\cdot (B_i\nabla\eta)\big]\right]d x\notag\\
&+\frac{1}{2} \int_{B_{2R}}(|U|^2+|B|^2)u \cdot \nabla \eta d x+\frac{1}{2} \int_{B_{2R}}|\omega|^2u \cdot \nabla \eta d x+\int_{B_{2R}}\curl U\cdot\curl v dx\\
&+2\chi\int_{B_{2R}}\curl U\cdot\omega\eta d x+\chi\int_{B_{2R}}\left[\omega\cdot(\nabla\eta\times U)-\omega\cdot\curl v\right] d x\notag\\
&- \int_{B_{2R}}(u \cdot\nabla )v \cdot U dx+\int_{B_{2R}}(b \cdot\nabla )v\cdot B dx-\int_{B_{2R}}(U \cdot B)b\cdot\nabla \eta dx\notag\\
:=&\sum_{i=1}^{9}I_i.\notag
\end{align}
With the help of the H\"{o}lder inequaity and the Poincar\'{e} inequality
$$\|\varphi-\overline{\varphi}_R\|_{L^2(A_R)}\leq CR\|\nabla \varphi\|_{L^2(A_R)}\text{ for any } \varphi\in W^{1,2}(A_R),$$
we have
\begin{align}\label{ine4.5}
I_1\leq& CR^{-1}\|\curl U\|_{L^2 (A_R)}\|U\|_{L^2 (A_R)}+CR^{-1}\|\divg \omega\|_{L^2 (A_R)}\|\omega\|_{L^2 (A_R)}\notag\\
&+CR^{-1}\|\nabla \omega\|_{L^2 (A_R)}\|\omega\|_{L^2 (A_R)}+CR^{-1}\|\nabla B\|_{L^2 (A_R)}\|B\|_{L^2 (A_R)}\notag\\
\leq& C\left(\|\nabla U\|_{L^2 (A_R)}^2+\|\nabla B\|_{L^2 (A_R)}^2\right)+CR^{-1}\|\nabla \omega\|_{L^2 (A_R)}\|\omega\|_{L^2 (A_R)}\\
\leq&  C\left(\|\nabla U\|_{L^2 (A_R)}^2+\|\nabla B\|_{L^2 (A_R)}^2+\|\nabla \omega\|_{L^2 (A_R)}^2+\|\omega\|_{L^2 (A_R)}^2\right),\notag
\end{align}
where we require $R\geq1$ in the last step.

By the H\"{o}lder inequality, we have
\begin{align}\label{ine4.8}
I_2&\leq CR^{-1}\left(\|u\|_{L^p(A_R)}\|U\|_{L^{2p'}(A_R)}^2+\|u\|_{L^p(A_R)}\|B\|_{L^{2p'}(A_R)}^2\right),
\end{align}
and
\begin{align}\label{add-w}
|\overline{\omega}_R|&\leq CR^{-\frac{3}{2}}\|\omega\|_{L^2(A_R)}.
\end{align}
Combining the H\"{o}lder inequality, the interpolation inequality, the Minkowski inequality, the following standard inequality
$$(a_1+a_2)^t\leq \max\{2^{t-1},1\}(a_1^t+a_2^t),\text{ where }a_1,a_2\geq0,\;t>0,$$
the Sobolev-Poincar\'{e} inequality
$$\|\varphi-\overline{\varphi}_R\|_{L^6(A_R)}\leq C\|\nabla \varphi\|_{L^2(A_R)}\text{ for any } \varphi\in W^{1,2}(A_R),$$
and \eqref{add-w}, we deduce
\begin{align}\label{ine4.9}
I_3&\leq CR^{-1}\|u\|_{L^p(A_R)}\|\omega\|_{L^{2p'}(A_R)}^2\notag\\
&\leq CR^{-1}\|u\|_{L^p(A_R)}\|\omega\|_{L^{2}(A_R)}^{2-\frac{3}{p}}\|\omega\|_{L^{6}(A_R)}^{\frac{3}{p}}\notag\\
&\leq CR^{-1}\|u\|_{L^p(A_R)}\|\omega\|_{L^{2}(A_R)}^{2-\frac{3}{p}}\left(\|\omega-\overline{\omega}_R\|_{L^{6}(A_R)}+\|\overline{\omega}_R\|_{L^{6}(A_R)}\right)^{\frac{3}{p}}\notag\\
&\leq CR^{-1}\|u\|_{L^p(A_R)}\|\omega\|_{L^{2}(A_R)}^{2-\frac{3}{p}}\left(\|\omega-\overline{\omega}_R\|_{L^{6}(A_R)}^{\frac{3}{p}}
+\|\overline{\omega}_R\|_{L^{6}(A_R)}^{\frac{3}{p}}\right)\\
&\leq CR^{-1}\|u\|_{L^p(A_R)}\|\omega\|_{L^{2}(A_R)}^{2-\frac{3}{p}}\left(\|\nabla\omega\|_{L^2(A_R)}^{\frac{3}{p}}
+R^{-\frac{3}{p}}\|\omega\|_{L^{2}(A_R)}^{\frac{3}{p}}\right)\notag\\
&\leq CR^{-1}\|u\|_{L^p(A_R)}\cdot f(2R),\notag
\end{align}
where the function $f$ is defined by \eqref{add-f}, and we require $R\geq1$ in the last step.

Using the H\"{o}lder inequality, \eqref{ine4.2} and the Poincar\'{e} inequality, we get
\begin{align}\label{ine4.10}
I_4
&\leq C\|\nabla U\|_{L^2(A_R)}\|\nabla v\|_{L^2(A_R)}\notag\\
&\leq CR^{-1}\|\nabla U\|_{L^2(A_R)}\|U\|_{L^2(A_R)}\\
&\leq C\|\nabla U\|_{L^2(A_R)}^2.\notag
\end{align}
Using the Young inequality, we obtain
\begin{align}\label{ine4.11}
\aligned
I_5&\leq 2\chi\int_{B_{2R}}|\curl U|\eta^{\frac{1}{2}}\cdot|\omega|\eta^{\frac{1}{2}} d x\\
&\leq \varepsilon\int_{B_{2R}}|\curl U|^2\eta d x+\frac{\chi^2}{\varepsilon}\int_{B_{2R}}|\omega|^2\eta d x.
\endaligned
\end{align}
Using the H\"{o}lder inequality, \eqref{ine4.2} and the Poincar\'{e} inequality, we get
\begin{align}\label{ine4.12}
I_6&\leq C\chi\left(\|\omega\|_{L^2(A_R)}\|\nabla\eta\times U\|_{L^{2}(A_R)}+\|\omega\|_{L^2(A_R)}\|\curl v\|_{L^{2}(A_R)}\right)\notag\\
&\leq C\chi\left(R^{-1}\|\omega\|_{L^2(A_R)}\| U\|_{L^{2}(A_R)}+\|\omega\|_{L^2(A_R)}\|\nabla v\|_{L^{2}(A_R)}\right)\notag\\
&\leq C\chi R^{-1}\|\omega\|_{L^2(A_R)}\| U\|_{L^{2}(A_R)}\\
&\leq C\chi \|\omega\|_{L^2(A_R)}\| \nabla U\|_{L^{2}(A_R)}\notag\\
&\leq C\chi\left(\| \nabla U\|_{L^{2}(A_R)}^2+\|\omega\|_{L^2(A_R)}^2\right).\notag
\end{align}
By the H\"{o}lder inequality and \eqref{ine4.2}, we have
\begin{align}\label{ine4.13}
\aligned
I_7&\leq \|u\|_{L^p(A_R)}\|\nabla v\|_{L^{2p'}(A_R)}\|U\|_{L^{2p'}(A_R)}\\
&\leq CR^{-1}\|u\|_{L^p(A_R)}\|U\|_{L^{2p'}(A_R)}^2.
\endaligned
\end{align}
Using the H\"{o}lder inequality, \eqref{ine4.2} and the Minkowski inequality, we get
\begin{align}\label{ine4.14}
I_8+I_9&\leq \left(\|\nabla v\|_{L^p(A_R)}+CR^{-1}\|U\|_{L^p(A_R)}\right)\|B\|_{L^{2p'}(A_R)}\|b\|_{L^{2p'}(A_R)}\notag\\
&\leq CR^{-1}\|U\|_{L^p(A_R)}\|B\|_{L^{2p'}(A_R)}\|b\|_{L^{2p'}(A_R)}\notag\\
&\leq CR^{-1}\|U\|_{L^p(A_R)}\|B\|_{L^{2p'}(A_R)}\left(\|B\|_{L^{2p'}(A_R)}+\left\|\overline{b}_R\right\|_{L^{2p'}(A_R)}\right)\\
&\leq CR^{-1}\|U\|_{L^p(A_R)}\|B\|_{L^{2p'}(A_R)}\left(\|B\|_{L^{2p'}(A_R)}+CR^{\frac{3}{2}-\frac{3}{2p}-\frac{3}{r}}\|b\|_{L^r(A_R)}\right)\notag\\
&\leq CR^{-1}\|u\|_{L^p(A_R)}\|B\|_{L^{2p'}(A_R)}\left(\|B\|_{L^{2p'}(A_R)}+CR^{\frac{3}{2}-\frac{3}{2p}-\frac{3}{r}}\|b\|_{L^r(A_R)}\right).\notag
\end{align}

Plugging the estimates of $I_1-I_9$ into \eqref{ine4.4}, we conclude
\begin{align}\label{ine4.15}
&\int_{B_{2R}}\left[(1-\varepsilon)|\curl U|^{2}+|\nabla \omega|^{2}+|\div \omega|^{2}+\left(2\chi-\frac{\chi^2}{\varepsilon}\right)|\omega|^{2}+|\nabla B|^{2}\right]\eta d x\notag\\
\leq& C\left(\|\nabla U\|_{L^2 (A_R)}^2+\|\nabla B\|_{L^2 (A_R)}^2+\|\nabla\omega\|_{L^2(A_R)}^2+\|\omega\|_{L^2(A_R)}^2\right)+ \notag\\
&CR^{-1}\|u\|_{L^p(A_R)}\|U\|_{L^{2p'}(A_R)}^2+CR^{-1}\|u\|_{L^p(A_R)}\|B\|_{L^{2p'}(A_R)}^2+\\
&CR^{-1}\|u\|_{L^p(A_R)}f(2R)+CR^{\frac{1}{2}-\frac{3}{2p}-\frac{3}{r}}\|u\|_{L^p(A_R)}\|B\|_{L^{2p'}(A_R)}\|b\|_{L^r(A_R)}.\notag
\end{align}
Since $\chi\in(0,2)$, we can choose some $\varepsilon\in\left(\frac{\chi}{2},1\right)$ such that
$$1-\varepsilon>0 \text{ and }2\chi-\frac{\chi^2}{\varepsilon}>0.$$

Multiply both sides of the identity $\curl ^2U=-\Delta U$ by $U \eta$, integrate over $B_{2R}$ and apply integration by parts. This procedure yields
\begin{align}\label{ine4.16}
\int_{B_{2R}}|\nabla U|^2\eta dx=\int_{B_{2R}}\left(|\curl  U|^2\eta+\curl  U\cdot(\nabla\eta\times U)-\sum_{i=1}^3\nabla U_i\cdot(U_i\nabla\eta)\right)dx.
\end{align}
Applying the H\"{o}lder inequality and the Poincar\'{e} inequality to the above equality, we get
\begin{align}\label{ine4.17}
\int_{B_{2R}}|\nabla U|^2\eta dx\leq&\int_{B_{2R}}|\curl  U|^2\eta dx+\|\curl  U\|_{L^2(A_R)}\|\nabla\eta\times U\|_{L^{2}(A_R)}\notag\\
&+\sum_{i=1}^3\|\nabla U_i\|_{L^2(A_R)}\| U_i\nabla\eta\|_{L^{2}(A_R)}\\
\leq&\int_{B_{2R}}|\curl  U|^2\eta dx+CR^{-1}\|\nabla U\|_{L^2(A_R)}\|U\|_{L^{2}(A_R)}\notag\\
\leq&\int_{B_{2R}}|\curl  U|^2\eta dx+C\|\nabla U\|_{L^2(A_R)}^2.\notag
\end{align}
Combining \eqref{ine4.15} and \eqref{ine4.17}, and recalling the definition of $E(R)$, we conclude that \eqref{ine4.18} holds.
\end{proof}

In order to prove Theorem \ref{main2}, we also need the estimate for $f(R)$.
\begin{Lem}
Let $(u,\pi,\omega,b)$ be a smooth solution of \eqref{equ1.1}. Suppose $q\in[1,+\infty]$, $r\in [1,6]$, $\beta\in\left[0,+\infty\right)$, $\gamma\in\left[0,\frac{3}{r}-\frac{1}{2}\right]$, $\lambda,\nu\geq0$, $\chi\in(0,2)$. Assume that one of the assumptions $\mathrm{(B1)}$, $\mathrm{(B2)}$, $\mathrm{(B3)}$ in Theorem \ref{main2} holds.
Then there exist three positive constants $R_2>3$, $A$ and $C$ such that
\begin{align}\label{ine4.21}
f(R)\leq C(\ln R)^{A},\;\forall R>R_2.
\end{align}
\end{Lem}

\begin{proof}
Based on the assumptions (B1), (B2) and (B3), we infer that there exist two positive constants $R_1>3$ and $C$
such that the following three inequalities hold for any $R>R_1$:
\begin{equation}\label{ine4.19}
\|u\|_{L^p\left(A_R\right)}\leq CR^\alpha(\ln R)^\lambda,
\end{equation}
\begin{equation}\label{add-w2}
\|\omega\|_{L^q\left(A_R\right)}\leq CR^\beta,
\end{equation}
\begin{equation}\label{ine4.20}
\|b\|_{L^r\left(A_R\right)}\leq CR^\gamma(\ln R)^\nu.
\end{equation}
Here we make a convention that when $p\in[3,\frac{9}{2}]$, we take $\lambda=0$.

For large $R$, we demonstrate $f(R)$ possesses the following estimates:
\begin{itemize}
\item[(i)] When $\frac{3}{2}< p<3$ and $1 \leq r<2p'$, we have $f(R)\leq C(\ln R)^{\frac{3p\lambda}{2p-3}}+C(\ln R)^{\frac{(6-r)p'\lambda}{(3-p')r}+2\nu}$.
\item[(ii)] When $\frac{3}{2}<p<3$ and $2p' \leq r\leq6$, we have $f(R)\leq C(\ln R)^{\frac{3p\lambda}{2p-3}}+C(\ln R)^{2\nu}$.
\item[(iii)] When $3\leq p\leq\frac{9}{2}$ and $1 \leq r<2p'$, we have $f(R)\leq C(\ln R)^{2\nu}$.
\end{itemize}
Since the above three items are similar, we only illustrate the first item.
When $\frac{3}{2}< p<3$, $1 \leq r<2p'$ and $1\leq q<+\infty$,
substituting \eqref{ine4.19}, \eqref{add-w2} and \eqref{ine4.20} into \eqref{add1}, we obtain
\begin{align}\label{add-f4}
f(R)\leq&CR^{1-\frac{6}{p}+2\alpha}(\ln R)^{2\lambda}+CR^{1-\frac{6}{r}+2\gamma}(\ln R)^{2\nu}+CR^{\frac{p-6}{2p-3}+\frac{3p}{2p-3}\alpha}(\ln R)^\frac{3p\lambda}{2p-3}\notag\\
&+C\left(h_1(q)R^{-\frac{6-q}{q(1-\theta)}+2\beta}+R^{3-\frac{6}{q}-\frac{2}{1-\theta}+2\beta}\right)\notag\\
&+Ch_2(q)\left(R^{\alpha+\frac{(6-2p')q}{(6-q)p'}(1-\theta)\beta-1}(\ln R)^\lambda\right)^{\frac{(6-q)p'}{(3-p')q(1-\theta)}}\\
&+Ch_3(q)\left(R^{\alpha+\frac{2q}{(q-2)p}(1-\theta)\beta-1}(\ln R)^\lambda\right)^\frac{(q-2)p}{(1-\theta)q}\notag\\
&+CR^{-\frac{(6-r)p'}{(3-p')r}+\frac{(6-r)p'}{(3-p')r}\alpha+2\gamma}(\ln R)^{\frac{(6-r)p'\lambda}{(3-p')r}+2\nu}\notag\\
\leq&C(\ln R)^{\frac{3p\lambda}{2p-3}}+C(\ln R)^{\frac{(6-r)p'\lambda}{(3-p')r}+2\nu},\notag
\end{align}
where we have used the facts that we state in the lines between \eqref{add1} and \eqref{ine3.13}.
When $\frac{3}{2}< p<3$, $1 \leq r<2p'$ and $q=+\infty$,
substituting \eqref{ine4.19}, \eqref{add-w2} and \eqref{ine4.20} into \eqref{add-f1}, we conclude
\begin{align}\label{add-f5}
f(R)\leq&CR^{1-\frac{6}{p}+2\alpha}(\ln R)^{2\lambda}+CR^{1-\frac{6}{r}+2\gamma}(\ln R)^{2\nu}\notag\\
&+CR^{3-\frac{2}{1-\theta}+2\beta}+CR^{\frac{p-6}{2p-3}+\frac{3p}{2p-3}\alpha}(\ln R)^\frac{3p\lambda}{2p-3}\notag\\
&+C\left(R^{\frac{3}{2p}(1-\theta)+\alpha+\frac{3}{p}(1-\theta)\beta-1}(\ln R)^\lambda\right)^\frac{2p}{3(1-\theta)}\\
&+CR^{-\frac{(6-r)p'}{(3-p')r}+\frac{(6-r)p'}{(3-p')r}\alpha+2\gamma}(\ln R)^{\frac{(6-r)p'\lambda}{(3-p')r}+2\nu}\notag\\
\leq&C(\ln R)^{\frac{3p\lambda}{2p-3}}+C(\ln R)^{\frac{(6-r)p'\lambda}{(3-p')r}+2\nu}.\notag
\end{align}
Putting \eqref{add-f4} and \eqref{add-f5} together, we complete the proof of the first item.

We choose
\begin{align*}
A=\max\left\{{\frac{3p\lambda}{2p-3},\;\frac{(6-r)p'\lambda}{(3-p')r}+2\nu},\;1\right\},
\end{align*}
and then we can get a unified estimate for $f(R)$:
\begin{align*}
f(R)\leq C(\ln R)^{A},\;\forall R>R_2,
\end{align*}
where $R_2$ is some constant satisfying $R_2>R_1$.
\end{proof}

With the above preparations, we are ready to prove Theorem \ref{main2}.

\begin{proof}[{\bf Proof of Theorem \ref{main2}}]
For convenience, we denote the five terms in the right hand side of \eqref{ine4.18} by $K_1,K_2,K_3,K_4,K_5$, respectively.
Denote
$$\theta=\max\left\{\frac{A}{A+1},\;\frac{9-3p}{6-p},\;\frac{6p-3r(p-1)}{[6-rh_2(r)]p}h_2(r)\right\},$$
where $h_2$ is defined in Lemma \ref{Lemma2.6}. It is not difficult to verify the fact that $\theta\in(0,1)$.
Using \eqref{ine4.1} and \eqref{ine4.21}, we have
\begin{align}\label{ine4.22}
K_1=K_1^\theta K_1^{1-\theta}\leq&\left[f(2R)\right]^{1-\theta}\left[\frac{1}{3}R E'(R)\right]^{\theta}\notag\\
\leq& C(\ln R)^{A(1-\theta)}\left[R E'(R)\right]^{\theta}\\
\leq&C\left[R\ln R E'(R)\right]^{\theta},\notag
\end{align}
where we require $A(1-\theta)\leq\theta$ and $R>R_2$. Using \eqref{ine4.21} and \eqref{ine4.19}, we see
 $\displaystyle\lim_{R\rightarrow+\infty}K_4=0$.

We claim that $E(R)\equiv0$, otherwise, in view of the nondecreasing property of $E(R)$, there exists a constant $R_3$ satisfying $R_3>R_2$ such that
$$\text{$E(R)\geq E(R_3)>0$ for any $R\geq R_3$.}$$
Since $\displaystyle\lim_{R\rightarrow+\infty}K_4=0$, there exists a constant $R_4> R_3$ such that
\begin{align}\label{ine4.23}
K_4\leq\frac{1}{8}E(R_3)\leq\frac{1}{8}E(R),\;\forall R>R_4.
\end{align}
Using the Minkowski inequality and the H\"{o}lder inequality, we derive
\begin{align}\label{ine4.6}
\|U\|_{L^p(A_R)}&\leq \|u\|_{L^p(A_R)}+\|\overline{u}_R\|_{L^p(A_R)}\notag\\
&\leq\|u\|_{L^p(A_R)}+CR^\frac{3}{p}|\overline{u}_R|\\
&\leq C\|u\|_{L^p(A_R)}.\notag
\end{align}
Similarly, we can derive
\begin{align}\label{ine4.7}
\|B\|_{L^r(A_R)}&\leq C\|b\|_{L^r(A_R)}.
\end{align}

\textbf{Assume that (B1) holds.}
Using the interpolation inequality, \eqref{ine4.6} and the Sobolev-Poincar\'{e} inequality, we obtain
\begin{align}\label{ine4.24}
K_2&\leq CR^{-1}\|u\|_{L^p(A_R)}\left(\|U\|_{L^p(A_R)}^{\frac{2p-3}{6-p}}\|U\|_{L^6(A_R)}^{\frac{9-3p}{6-p}}\right)^2 \notag\\
&\leq CR^{-1}\|u\|_{L^p(A_R)}^{\frac{3p}{6-p}}\|\nabla U\|_{L^2(A_R)}^{\frac{2(9-3p)}{6-p}}\\
&\leq CR^{-1+\frac{3p\alpha}{6-p}}(\ln R)^{\frac{3p\lambda}{6-p}}[RE'(R)]^{\frac{9-3p}{6-p}}\notag\\
&\leq C\left[R\ln RE'(R)\right]^{\frac{9-3p}{6-p}}.\notag
\end{align}
Here we have used the conditions
$$\alpha\leq \frac{2}{p}-\frac{1}{3} \text{ and } \lambda\leq\frac{3}{p}-1.$$
By the interpolation inequality, \eqref{ine4.7} and the Sobolev-Poincar\'{e} inequality, we obtain
\begin{align}\label{ine4.25}
K_3&\leq CR^{-1}\|u\|_{L^p(A_R)}\left(\|B\|_{L^r(A_R)}^{\frac{(3-p')r}{(6-r)p'}}\|B\|_{L^6(A_R)}^{\frac{6p'-3r}{(6-r)p'}}\right)^2 \notag\\
&\leq CR^{-1}\|u\|_{L^p(A_R)}\|b\|_{L^r(A_R)}^{\frac{2(3-p')r}{(6-r)p'}}\|\nabla B\|_{L^2(A_R)}^{\frac{12p'-6r}{(6-r)p'}}\notag\\
&\leq CR^{-1+\alpha+\frac{2(3-p')r}{(6-r)p'}\gamma}(\ln R)^{\lambda+\frac{2(3-p')r}{(6-r)p'}\nu}[RE'( R)]^{\frac{6p'-3r}{(6-r)p'}}\\
&\leq C\left[R\ln RE'(R)\right]^{\frac{6p'-3r}{(6-r)p'}}\notag\\
&= C\left[R\ln RE'(R)\right]^{\frac{6p-3r(p-1)}{(6-r)p}},\notag
\end{align}
where we have used the conditions
$$\alpha+\frac{(4p-6)r}{(6-r)p}\gamma\leq1\text{ and }\lambda+\frac{(4p-6)r}{(6-r)p}\nu\leq\frac{6p-3r(p-1)}{(6-r)p}.$$

By the interpolation inequality, \eqref{ine4.7} and the Sobolev-Poincar\'{e} inequality, we obtain
\begin{align}\label{ine4.26}
K_5&\leq CR^{\frac{1}{2}-\frac{3}{2p}-\frac{3}{r}}\|u\|_{L^p(A_R)}\|B\|_{L^r(A_R)}^{\frac{(3-p')r}{(6-r)p'}}\|B\|_{L^6(A_R)}^{\frac{6p'-3r}{(6-r)p'}} \|b\|_{L^r(A_R)}\notag\\
&\leq CR^{\frac{1}{2}-\frac{3}{2p}-\frac{3}{r}}\|u\|_{L^p(A_R)}\|b\|_{L^r(A_R)}^{1+\frac{(3-p')r}{(6-r)p'}}\|\nabla B\|_{L^2(A_R)}^{\frac{6p'-3r}{(6-r)p'}} \\
&\leq CR^{\frac{1}{2}-\frac{3}{2p}-\frac{3}{r}+\alpha+\left[1+\frac{(3-p')r}{(6-r)p'}\right]\gamma}(\ln R)^{\lambda+\left[1+\frac{(3-p')r}{(6-r)p'}\right]\nu}[RE'( R)]^{\frac{6p'-3r}{2(6-r)p'}}.\notag
\end{align}
Notice that
\begin{align*}
&\frac{1}{2}-\frac{3}{2p}-\frac{3}{r}+\alpha+\left[1+\frac{(3-p')r}{(6-r)p'}\right]\gamma\notag\\
\leq&\frac{1}{2}-\frac{3}{2p}-\frac{3}{r}+1-\frac{(4p-6)r}{(6-r)p}\gamma+\left[1+\frac{(3-p')r}{(6-r)p'}\right]\gamma\notag\\
=&\frac{3}{2}-\frac{3}{2p}-\frac{3}{r}+\frac{6p-3pr+3r}{(6-r)p}\gamma\\
\leq&\frac{3}{2}-\frac{3}{2p}-\frac{3}{r}+\frac{6p-3pr+3r}{(6-r)p}\left(\frac{3}{r}-\frac{1}{2}\right)\notag\\
=&0.\notag
\end{align*}
Moreover, we claim that if $(\alpha,\gamma)\neq\left(\frac{3}{p}-1,\frac{3}{r}-\frac{1}{2}\right)$, then the index
$\frac{1}{2}-\frac{3}{2p}-\frac{3}{r}+\alpha+\left[1+\frac{(3-p')r}{(6-r)p'}\right]\gamma$ can not attain zero, otherwise, we have
\begin{align*}
\begin{cases}
\gamma=\frac{3}{r}-\frac{1}{2}\\
\alpha=1-\frac{(4p-6)r}{(6-r)p}\gamma=\frac{3}{p}-1,
\end{cases}
\end{align*}
which is impossible.
Hence, when $(\alpha,\gamma)\neq\left(\frac{3}{p}-1,\frac{3}{r}-\frac{1}{2}\right)$, there exists a constant $R_5> R_4$ such that
\begin{align*}
K_5&\leq C\left[RE'(R)\right]^{\frac{6p-3r(p-1)}{2(6-r)p}}\leq C\left[R\ln RE'(R)\right]^{\frac{6p-3r(p-1)}{2(6-r)p}},\;\forall R\geq R_5.
\end{align*}
When $(\alpha,\gamma)=\left(\frac{3}{p}-1,\frac{3}{r}-\frac{1}{2}\right)$, the index
$\frac{1}{2}-\frac{3}{2p}-\frac{3}{r}+\alpha+\left[1+\frac{(3-p')r}{(6-r)p'}\right]\gamma=0$,
so we need to require
$$\lambda+\left[1+\frac{(3-p')r}{(6-r)p'}\right]\nu\leq\frac{6p'-3r}{2(6-r)p'},\text{ i.e. }\lambda+\frac{6p+pr-3r}{(6-r)p}\nu\leq\frac{6p-3r(p-1)}{2(6-r)p},$$
which guarantees
\begin{align*}
K_5
&\leq C\left[R\ln RE'(R)\right]^{\frac{6p'-3r}{2(6-r)p}}= C\left[R\ln RE'(R)\right]^{\frac{6p-3r(p-1)}{2(6-r)p}}.
\end{align*}
Therefore, no matter $(\alpha,\gamma)$  equals to $\left(\frac{3}{p}-1,\frac{3}{r}-\frac{1}{2}\right)$ or not, it always holds that
\begin{align}\label{ine4.27}
K_5\leq C\left[R\ln RE'(R)\right]^{\frac{6p-3r(p-1)}{2(6-r)p}},\;\forall R\geq R_5.
\end{align}

Combining \eqref{ine4.18}, \eqref{ine4.22}, \eqref{ine4.23}, \eqref{ine4.24}, \eqref{ine4.25} and \eqref{ine4.27}, we obtain
\begin{align}\label{ine4.28}
E(R)\leq& C\left[R\ln R E'(R)\right]^{\theta}+C\left[R\ln RE'(R)\right]^{\frac{9-3p}{6-p}}+C\left[R\ln RE'(R)\right]^{\frac{6p-3r(p-1)}{(6-r)p}}\notag\\
&+\frac{1}{8}E(R)+C\left[R\ln RE'(R)\right]^{\frac{6p-3r(p-1)}{2(6-r)p}},\;\forall R\geq R_5.
\end{align}
Applying the Young inequality, we have
\begin{align}\label{ine4.29}
C\left[R\ln RE'(R)\right]^{\frac{9-3p}{6-p}}\leq\frac{1}{8}E(R_3)+C\left[R\ln RE'(R)\right]^{\theta},
\end{align}
\begin{align}\label{ine4.30}
C\left[R\ln RE'(R)\right]^{\frac{6p-3r(p-1)}{(6-r)p}}\leq\frac{1}{8}E(R_3)+C\left[R\ln RE'(R)\right]^{\theta},
\end{align}
\begin{align}\label{ine4.31}
C\left[R\ln RE'(R)\right]^{\frac{6p-3r(p-1)}{2(6-r)p}}\leq\frac{1}{8}E(R_3)+C\left[R\ln RE'(R)\right]^{\theta}.
\end{align}
Combining \eqref{ine4.28}, \eqref{ine4.29}, \eqref{ine4.30} and \eqref{ine4.31}, we obtain
\begin{align*}
E(R)&\leq C\left[R\ln RE'(R)\right]^{\theta}+\frac{1}{8}E(R)+\frac{3}{8}E(R_3)\notag\\
&\leq C\left[R\ln RE'(R)\right]^{\theta}+\frac{1}{2}E(R),\;\forall R\geq R_5,
\end{align*}
which implies
\begin{align*}
E(R)
&\leq C\left[R\ln RE'(R)\right]^{\theta},\;\forall R\geq R_5.
\end{align*}
Consequently, it follows that
\begin{align*}
\ln\ln R-\ln\ln R_5=\int_{R_5}^{R}\frac{1}{\rho\ln\rho}d\rho\leq\int_{R_5}^{R}\frac{CE'(\rho)}{E(\rho)^{\frac{1}{\theta}}}d\rho\leq CE(R_5)^{1-\frac{1}{\theta}}<+\infty.
\end{align*}
Letting $R\rightarrow+\infty$, the above inequality leads to a contradiction. Thus, $E(R)\equiv0$.

Thanks to the simple inequality
$$\|\nabla u\|_{L^2(B_R)}^2+\|\nabla b\|_{L^2(B_R)}^2+\|\nabla \omega\|_{L^2(B_R)}^2+\|\omega\|_{L^2(B_R)}^2\leq E(R),$$
we conclude that $u,b$ are constant vectors and $\omega=0$. Finally, the condition
$$\limsup\limits_{R\rightarrow+\infty}\left[X_{p,\alpha,\lambda}(R)+Y_{q,\beta}(R)+Z_{r,\gamma,\nu}(R)\right]<+\infty$$
forces $u$ and $b$ to be zero.
\par \textbf{Assume that (B2) holds.} Using the H\"{o}lder inequality and \eqref{ine4.7}, we obtain
\begin{align}\label{ine4.32}
\aligned
K_3&\leq CR^{-1}\|u\|_{L^p(A_R)}\left(\|B\|_{L^r(A_R)}CR^{3\left(\frac{1}{2p'}-\frac{1}{r}\right)}\right)^2\\
&\leq CR^{2-\frac{3}{p}-\frac{6}{r}}\|u\|_{L^p(A_R)}\|b\|_{L^r(A_R)}^{2},
\endaligned
\end{align}
and
\begin{align}\label{ine4.33}
\aligned
K_5&\leq CR^{\frac{1}{2}-\frac{3}{2p}-\frac{3}{r}}\|u\|_{L^p(A_R)}\|B\|_{L^r(A_R)}CR^{3(\frac{1}{2p'}-\frac{1}{r})}\|b\|_{L^r(A_R)}\\
&\leq CR^{2-\frac{3}{p}-\frac{6}{r}}\|u\|_{L^p(A_R)}\|b\|_{L^r(A_R)}^{2}.
\endaligned
\end{align}
Combining \eqref{ine4.18}, \eqref{ine4.22}, \eqref{ine4.23}, \eqref{ine4.24}, \eqref{ine4.32} and \eqref{ine4.33}, we obtain
\begin{align}\label{ine4.34}
E(R)\leq& C\left[R\ln R E'(R)\right]^{\theta}+C\left[R\ln RE'(R)\right]^{\frac{9-3p}{6-p}}\notag\\
&+\frac{1}{8}E(R)+CR^{2-\frac{3}{p}-\frac{6}{r}}\|u\|_{L^p(A_R)}\|b\|_{L^r(A_R)}^2.
\end{align}
Since $\alpha+2\gamma<\frac{3}{p}+\frac{6}{r}-2$, it holds that
\begin{align*}
\lim_{R\rightarrow+\infty}CR^{2-\frac{3}{p}-\frac{6}{r}}\|u\|_{L^p(A_R)}\|b\|_{L^r(A_R)}^2=0.
\end{align*}
As a consequence, there exists a constant $R_6>R_3$ such that
\begin{align}\label{ine4.35}
CR^{2-\frac{3}{p}-\frac{6}{r}}\|u\|_{L^p}\|b\|_{L^r}^2\leq\frac{1}{8}E(R_3)\leq\frac{1}{8}E(R),\; \forall R\geq R_6.
\end{align}
From \eqref{ine4.34}, \eqref{ine4.29} and \eqref{ine4.35}, we have
\begin{align*}
E(R)\leq& C\left[R\ln R E'(R)\right]^{\theta}+\frac{3}{8}E(R),
\end{align*}
which yields
\begin{align*}
E(R)
&\leq C\left[R\ln RE'(R)\right]^{\theta}.
\end{align*}
Routinely, we have $u=b=\omega=0$.
\par \textbf{Assume that (B3) holds.} Using the H\"{o}lder inequality and \eqref{ine4.6}, we obtain
\begin{align}\label{ine4.36}
\aligned
K_2&\leq CR^{-1}\|u\|_{L^p(A_R)}\left(\|U\|_{L^p(A_R)}CR^{3\left(\frac{1}{2p'}-\frac{1}{p}\right)}\right)^2\leq CR^{2-\frac{9}{p}}\|u\|_{L^p(A_R)}^3.
\endaligned
\end{align}
Similarly to the case that (B1) holds, we can establish the estimate for $K_5$:
\begin{align}\label{ine4.37}
K_5&\leq C\left[R\ln RE'(R)\right]^{\frac{6p-3r(p-1)}{2(6-r)p}}.
\end{align}
Combining \eqref{ine4.18}, \eqref{ine4.22}, \eqref{ine4.23}, \eqref{ine4.25}, \eqref{ine4.36} and \eqref{ine4.37}, we obtain
\begin{align}\label{ine4.38}
\aligned
E(R)\leq& C\left[R\ln R E'(R)\right]^{\theta}+CR^{2-\frac{9}{p}}\|u\|_{L^p(A_R)}^3+C\left[R\ln RE'(R)\right]^{\frac{6p-3r(p-1)}{(6-r)p}}\\
&+\frac{1}{8}E(R)+C\left[R\ln RE'(R)\right]^{\frac{6p-3r(p-1)}{2(6-r)p}}.
\endaligned
\end{align}
Since
\begin{align*}
\lim_{R\rightarrow+\infty} CR^{2-\frac{9}{p}}\|u\|_{L^p(A_R)}^3=\lim_{R\rightarrow+\infty} CR^{2-\frac{9}{p}+3\alpha}[X_{p,\alpha}(R)]^3=0\notag,
\end{align*}
there exists a constant $R_7>R_3$ such that
\begin{align}\label{ine4.39}
 CR^{2-\frac{9}{p}}\|u\|_{L^p(A_R)}^3\leq\frac{1}{8}E(R_3)\leq\frac{1}{8}E(R),\;\forall R\geq R_7.
\end{align}
Putting \eqref{ine4.30}, \eqref{ine4.31} and \eqref{ine4.39} into \eqref{ine4.38}, we obtain
\begin{align*}
E(R)\leq& C\left[R\ln R E'(R)\right]^{\theta}+\frac{1}{2}E(R),
\end{align*}
which gives
\begin{align*}
E(R)
&\leq C\left[R\ln RE'(R)\right]^{\theta}.
\end{align*}
Routinely, we have $u=b=\omega=0$.
\end{proof}

The proof of Theorem \ref{main4} is similar to that of Theorem \ref{main2}.
Here we provide another slightly different method to establish the energy estimate. We briefly illustrate our idea and only consider several key terms that we adopt different techniques to handle. If we do not replace $-\Delta U$ by $\curl^2 U$ in \eqref{equ4.3}, then we will get the following equality
\begin{align*}
\int_{B_{2R}}(|\nabla U|^2 +\kappa|\omega|^2)\eta dx+\cdots=\int_{B_{2R}}\curl U\cdot \omega \eta dx+\cdots.
\end{align*}
Applying the Young inequality to the above equality, we obtain
\begin{align}\label{ine4.42}
\aligned
\int_{B_{2R}}(|\nabla U|^2 +\kappa|\omega|^2)\eta dx+\cdots&\leq \int_{B_{2R}}|\curl U|\eta^\frac{1}{2}\cdot |\omega| \eta^\frac{1}{2}dx+\cdots\\
&\leq\int_{B_{2R}}\left(\varepsilon|\curl U|^2\eta+ \frac{1}{4\varepsilon}|\omega|^2\eta\right)dx+\cdots.
\endaligned
\end{align}
Using \eqref{ine4.16}, we can deduce
\begin{align}\label{ine4.43}
\int_{B_{2R}}|\curl  U|^2\eta dx\leq&\int_{B_{2R}}|\nabla U|^2\eta dx+C\|\nabla U\|_{L^2(A_R)}^2.
\end{align}
Combining \eqref{ine4.42} and \eqref{ine4.43}, we infer
\begin{align*}
\int_{B_{2R}}\left[(1-\varepsilon)|\nabla U|^2 +\left(\kappa-\frac{1}{4\varepsilon}\right)|\omega|^2\right]\eta dx+\cdots&\leq C\|\nabla U\|_{L^2(A_R)}^2+\cdots.
\end{align*}

\subsection*{Acknowledgements}
This work was supported by Science Foundation for the Excellent Youth Scholars of Higher Education of Anhui Province (Grant No. 2023AH030073), and Domestic Study and Research Support Program for Young Key Teachers of Higher Education of Anhui Province (Grant No. JNFX2025027).

\subsection*{Data Availability Statement}
No data was used for the research described in the article.

\subsection*{Conflict of Interest Statement}
The authors have no relevant financial or non-financial interests to disclose.

 \vspace {0.1cm}

\begin {thebibliography}{DUMA}
\bibitem{AS74}  G. Ahmadi, M. Shahinpoor, Universal stability of magneto-micropolar fluid motions, Internat. J. Engrg. Sci. 12(7) (1974)
657-663.

\bibitem{BRF03} J. Boldrini, M. Rojas-Medar, E. Fern\'{a}ndez-Cara, Semi-Galerkin approximation and strong solutions to the equations of the nonhomogeneous asymmetric fluids, J. Math. Pures Appl. (9) 82 (2003), no. 11, 1499-1525.

\bibitem{Chae14} D. Chae, Liouville-type theorems for the forced Euler equations and the Navier-Stokes equations, Comm. Math. Phys. 326(1) (2014) 37-48.

\bibitem{CL24} D. Chae, J. Lee, On Liouville type results for the stationary MHD in $\mathbb{R}^3$, Nonlinearity 37(9) (2024), Paper No. 095006, 15 pp.

\bibitem{CW19} D. Chae, J. Wolf, On Liouville type theorem for the stationary Navier-Stokes equations, Calc. Var. Partial Differential Equations 58(3) (2019), Paper No. 111, 11 pp.

\bibitem{DJL21} D. Chamorro, O. Jarr\'{\i}n, P.-G. Lemari\'{e}-Rieusset, Some Liouville theorems for stationary Navier-Stokes equations in Lebesgue and Morrey spaces, Ann. Inst. H. Poincar\'{e} C Anal. Non Lin\'{e}aire, 38(3) (2021) 689-710.

\bibitem{CLV} D. Chamorro, D. Llerena, G. Vergara-Hermosilla, Some remarks about the stationary micropolar fluid equations: existence, regularity and uniqueness, J. Math. Anal. Appl. 536(2) (2024), Paper No. 128201, 23 pp.

\bibitem{CNY24} Y. Cho, J. Neustupa, M. Yang, New Liouville type theorems for the stationary Navier-Stokes, MHD, and Hall-MHD equations, Nonlinearity 37(3) (2024), Paper No. 035007, 22 pp.

\bibitem{CNY25} Y. Cho, J. Neustupa, M. Yang, New Liouville-type theorems for stationary solutions of the equations of motion of a magneto-micropolar fluid, J. Differential Equations 441 (2025), Paper No. 113488, 22 pp.

\bibitem{CY25} Y. Cho, M. Yang, Logarithmic improvement of a Liouville-type theorem for the stationary Navier-Stokes equations, preprint, arXiv:2501.04372.

\bibitem{Galdi} G.P. Galdi, An introduction to the Mathematical Theory of the Navier-Stokes Equations: Steady-State Problems, 2nd edn., Springer Monographs in Mathematics, Springer, New York, 2011.

\bibitem{Giaquinta} M. Giaquinta, Multiple Integrals in the Calculus of Variations and Nonlinear Elliptic Systems, Princeton University Press, Princeton, New Jersey, 1983.

\bibitem{KK} J.M. Kim, S. Ko, Some Liouville-type theorems for the stationary 3D magneto-micropolar fluids, Acta Math. Sci. Ser. B 44(6) (2024) 2296-2306.

\bibitem{KNSS09} G. Koch, N. Nadirashvili,  G. Seregin, V. $\check{S}$ver$\acute{a}$k, Liouville theorems for the Navier-Stokes equations and applications, Acta Math. 203(1) (2009) 83-105.

\bibitem{KTW17} H. Kozono, Y. Terasawa, Y. Wakasugi,  A remark on Liouville-type theorems for the stationary Navier-Stokes equations in three space dimensions, J. Funct. Anal. 272(2) (2017) 804-818.

\bibitem{LLZ24} X. Lin, C. Liu, T. Zhang, Magneto-micropolar boundary layers theory in Sobolev spaces without monotonicity: well-posedness and convergence theory, Calc. Var. Partial Differential Equations 63(3) (2024), Paper No. 76, 62 pp.

\bibitem{R97} M. Rojas-Medar, Magneto-micropolar fluid motion: existence and uniqueness of strong solution, Math. Nachr. 188 (1997) 301-319.

\bibitem{RB88} M. Rojas-Medar, J. Boldrini, Magneto-micropolar fluid motion: existence of weak solutions, Rev. Mat. Complut. 11(2) (1988) 443-460.

\bibitem{Seregin16} G. Seregin, Liouville type theorem for stationary Navier-Stokes equations, Nonlinearity 29(8) (2016) 2191-2195.

\bibitem{Seregin18} G. Seregin, Remarks on Liouville type theorems for steady-state Navier-Stokes equations, Algebra i Analiz   30(2) (2018) 238-248;  reprinted in  St. Petersburg Math. J.  30(2)  (2019) 321-328.

\bibitem{SW19} G. Seregin, W. Wang, Sufficient conditions on Liouville type theorems for the 3D steady Navier-Stokes equations, Algebra i Analiz 31(2) (2019) 269-278; reprinted in St. Petersburg Math. J. 31(2) (2020) 387-393.

\bibitem{TWZ19} Z. Tan, W. Wu, J. Zhou, Global existence and decay estimate of solutions to magneto-micropolar fluid equations, J. Differ. Equ. 266(7) (2019) 4137-4169.

\bibitem{Tsai21} T.P. Tsai, Liouville type theorems for stationary Navier-Stokes equations, Partial Differ. Equ. Appl. 2(1) (2021), Paper No. 10, 20 pp.

\bibitem{ZWX25} X. Zhai, J. Wu, F. Xu, Stability for the 3D magneto-micropolar fluids with only velocity dissipation near a background magnetic field, J. Differ. Equ. 425 (2025) 596-626.

\bibitem{ZZB25} Z. Zhang, New Liouville type theorems for 3D steady incompressible MHD equations and Hall-MHD equations, preprint, arXiv:2503.13202. 	

\bibitem{Zhao23} J. Zhao, Mild ill-posedness for 2D magneto-micropolar fluid equations near a background magnetic field, SIAM J. Math. Anal. 55 (2023), no. 5, 5967-5992.

\end{thebibliography}

\end {document}